\providecommand{\U}[1]{\protect\rule{.1in}{.1in}}
\providecommand{\U}[1]{\protect\rule{.1in}{.1in}}
\newtheorem{theorem}{Theorem}[section]
\newtheorem{corollary}{Corollary}[section]
\newtheorem{lemma}{Lemma}[section]
\newtheorem{remark}{Remark}[section]
\renewcommand{\@biblabel}[1]{}
\begin{document}

\begin{center}
{\Large \textbf{On the estimation of the extreme value index for randomly
right-truncated data and application}}\medskip\medskip

{\large Souad Benchaira, Djamel Meraghni, Abdelhakim Necir}$^{\ast}$\medskip

{\small \textit{Laboratory of Applied Mathematics, Mohamed Khider University,
Biskra, Algeria}}\medskip\medskip%
\[
\]

\end{center}

\noindent\textbf{Abstract}\medskip

\noindent We introduce a consistent estimator of the extreme value index under
random truncation based on a single sample fraction of top observations from
truncated and truncation data. We establish the asymptotic normality of the
proposed estimator by making use of the weighted tail-copula process framework
and we check its finite sample behavior through some simulations. As an
application, we provide asymptotic normality results for an estimator of the
excess-of-loss reinsurance premium.\medskip\medskip

\noindent\textbf{Keywords:} Bivariate extremes; Hill estimator; Lynden-Bell
estimator; Random truncation; Reinsurance premium, Tail dependence.\medskip

\noindent\textbf{AMS 2010 Subject Classification:} 62P05; 62H20; 91B26; 91B30.

\vfill

\vfill

\noindent{\small $^{\text{*}}$Corresponding author:
\texttt{necirabdelhakim@yahoo.fr} \newline\noindent\textit{E-mail
addresses:}\newline\texttt{benchaira.s@hotmail.fr} (S.~Benchaira)\newline%
\texttt{djmeraghni@yahoo.com} (D.~Meraghni)}

\section{\textbf{Introduction\label{sec1}}}

\noindent Let $\left(  \mathbf{X}_{i},\mathbf{Y}_{i}\right)  ,$ $1\leq i\leq
N,$ be $N\geq1$ independent copies from a couple $\left(  \mathbf{X}%
,\mathbf{Y}\right)  $ of independent positive random variables (rv's) defined
over some probability space $\left(  \Omega,\mathcal{A},\mathbf{P}\right)  ,$
with continuous marginal distribution functions (df's) $\mathbf{F}$ and
$\mathbf{G}$ respectively.$\ $ Suppose that $\mathbf{X}$ is right-truncated by
$\mathbf{Y},$ in the sense that $\mathbf{X}_{i}$ is only observed when
$\mathbf{X}_{i}\leq\mathbf{Y}_{i}.$ We assume that both survival functions
$\overline{\mathbf{F}}:=1-\mathbf{F}$\textbf{\ }and $\overline{\mathbf{G}%
}:=1-\mathbf{G}$ are regularly varying at infinity with respective indices
$-1/\gamma_{1}$ and $-1/\gamma_{2}.$ That is, for any $s>0$
\begin{equation}
\lim_{x\rightarrow\infty}\frac{\overline{\mathbf{F}}\left(  sx\right)
}{\overline{\mathbf{F}}\left(  x\right)  }=s^{-1/\gamma_{1}}\text{ and }%
\lim_{y\rightarrow\infty}\frac{\overline{\mathbf{G}}\left(  sy\right)
}{\overline{\mathbf{G}}\left(  y\right)  }=s^{-1/\gamma_{2}}. \label{RV-1}%
\end{equation}
Being characterized by their heavy tails, these distributions play a prominent
role in extreme value theory. They include distributions such as Pareto, Burr,
Fr\'{e}chet, stable and log-gamma, known to be appropriate models for fitting
large insurance claims, log-returns, large fluctuations,
etc...\ \citep[see, e.g.,][]{Res06}. The truncation phenomenon may occur in
many fields, for instance, in insurance it is usual that the insurer's claim
data do not correspond to the underlying losses, because they are truncated
from above. Indeed, when dealing with large claims, the insurance company
stipulates an upper limit to the amounts to be paid out. The excesses over
this fixed threshold are covered by a reinsurance company. This kind of
reinsurance is called excess-loss reinsurance \citep[see, e.g.,][]{RSST-1999}.
Depending on the branches of insurance, the upper limit, which may be random,
is called in different ways: in life insurance, it is called the cedent's
company retention level whereas in non-life insurance, it is called the
deductible. For a recent paper on randomly right-truncated insurance claims,
one refers to \cite{EO-2008}.\medskip

\noindent Let us now denote $\left(  X_{i},Y_{i}\right)  ,$ $i=1,...,n,$ to be
the observed data, as copies of a couple of rv's $\left(  X,Y\right)  $ with
joint df $H,$ corresponding to the truncated sample $\left(  \mathbf{X}%
_{i},\mathbf{Y}_{i}\right)  ,$ $i=1,...,N,$ where $n=n_{N}$ is a sequence of
discrete rv's. By the law of the large numbers, we have $n_{N}/N\overset
{\mathbf{P}}{\rightarrow}p:=\mathbf{P}\left(  \mathbf{X}\leq\mathbf{Y}\right)
,$ as $N\rightarrow\infty.$ For convenience, we use, throughout the paper, the
notation $n\rightarrow\infty$ to say that $n\overset{\mathbf{P}}{\rightarrow
}\infty.$ For $x,y\geq0,$ we have%
\[%
\begin{array}
[c]{ll}%
H\left(  x,y\right)  & :=\mathbf{P}\left(  X\leq x,Y\leq y\right)  \medskip\\
& =\mathbf{P}\left(  \mathbf{X}\leq x,\mathbf{Y}\leq y\mid\mathbf{X}%
\leq\mathbf{Y}\right)  =p^{-1}%
{\displaystyle\int_{0}^{x}}
\mathbf{F}\left(  \min\left(  y,z\right)  \right)  d\mathbf{G}\left(
z\right)  .
\end{array}
\]
Note that, conditionally on $n,$ the observed data are still independent. The
marginal distributions of the observed $X^{\prime}s$ and $Y^{\prime}s,$
respectively denoted by $F$ and $G,$ are equal to%
\[
F\left(  x\right)  =p^{-1}%
{\displaystyle\int_{0}^{x}}
\overline{\mathbf{G}}\left(  z\right)  d\mathbf{F}\left(  z\right)  \text{ and
}G\left(  y\right)  =p^{-1}\int_{0}^{y}\mathbf{F}\left(  z\right)
d\mathbf{G}\left(  z\right)  ,
\]
it follows that the corresponding tails%
\[
\overline{F}\left(  x\right)  =-p^{-1}%
{\displaystyle\int_{x}^{\infty}}
\overline{\mathbf{G}}\left(  z\right)  d\overline{\mathbf{F}}\left(  z\right)
\text{ and }\overline{G}\left(  y\right)  =-p^{-1}%
{\displaystyle\int_{y}^{\infty}}
\mathbf{F}\left(  z\right)  d\overline{\mathbf{G}}\left(  z\right)  .
\]
It is clear that the asymptotic behavior of $\overline{F}$ simultaneously
depends on $\overline{\mathbf{G}}$ and $\overline{\mathbf{F}}$ while that of
$\overline{G}$ only relies on $\overline{\mathbf{G}}\mathbf{.}$ Making use of
Potter's bound inequalities (see Lemma \ref{Lem3}), for the regularly varying
functions $\overline{\mathbf{F}}$ and $\overline{\mathbf{G}},$ we may readily
show that both $\overline{G}$ and $\overline{F}$ are regularly varying at
infinity as well, with respective indices $\gamma_{2}$ and $\gamma:=\gamma
_{1}\gamma_{2}/\left(  \gamma_{1}+\gamma_{2}\right)  .$ That is, we have, for
any $s>0,$%
\begin{equation}
\lim_{x\rightarrow\infty}\frac{\overline{F}\left(  sx\right)  }{\overline
{F}\left(  x\right)  }=s^{-1/\gamma}\text{ and }\lim_{y\rightarrow\infty}%
\frac{\overline{G}\left(  sy\right)  }{\overline{G}\left(  y\right)
}=s^{-1/\gamma_{2}}. \label{RV-2}%
\end{equation}
Recently \cite{GS2014} addressed the estimation of the extreme value index
$\gamma_{1}$ under random truncation. They used the definition of $\gamma$ to
derive the following consistent estimator:%
\[
\widehat{\gamma}_{1}\left(  k,k^{\prime}\right)  :=\frac{\widehat{\gamma
}\left(  k\right)  \widehat{\gamma}_{2}\left(  k^{\prime}\right)  }%
{\widehat{\gamma}_{2}\left(  k^{\prime}\right)  -\widehat{\gamma}\left(
k\right)  },
\]
where%
\begin{equation}
\widehat{\gamma}\left(  k\right)  :=\frac{1}{k}\sum_{i=1}^{k}\log
\frac{X_{n-i+1:n}}{X_{n-k:n}}\text{ and }\widehat{\gamma}_{2}\left(
k^{\prime}\right)  :=\frac{1}{k^{\prime}}\sum_{i=1}^{k^{\prime}}\log
\frac{Y_{n-i+1:n}}{Y_{n-k^{\prime}:n}}, \label{Hills}%
\end{equation}
are the well-known Hill estimators of $\gamma$ and $\gamma_{2},$ with
$X_{1:n}\leq...\leq X_{n:n}$ and $Y_{1:n}\leq...\leq Y_{n:n}$ being the order
statistics pertaining to the samples $\left(  X_{1},...,X_{n}\right)  $ and
$\left(  Y_{1},...,Y_{n}\right)  $ respectively. The two sequences $k=k_{n}$
and $k^{\prime}=k_{n}^{\prime}$ of integer rv's, which satisfy%
\[
1<k,k^{\prime}<n,\text{ }\min\left(  k,k^{\prime}\right)  \rightarrow
\infty\text{ and }\max\left(  k/n,k^{\prime}/n\right)  \rightarrow0\text{ as
}n\rightarrow\infty,
\]
respectively represent the numbers of top observations from truncated and
truncation data. By considering the two situations $k/k^{\prime}\rightarrow0$
and $k^{\prime}/k\rightarrow0$ as $n\rightarrow\infty,$ the authors
established the asymptotic normality of $\widehat{\gamma}_{1}\left(
k,k^{\prime}\right)  ,$ but when $k/k^{\prime}\rightarrow1,$ they only showed
that%
\[
\sqrt{\min\left(  k,k^{\prime}\right)  }\left(  \widehat{\gamma}_{1}\left(
k,k^{\prime}\right)  -\gamma_{1}\right)  =O_{\mathbf{p}}\left(  1\right)
,\text{ as }n\rightarrow\infty,
\]
which is not enough to construct confidence intervals for $\gamma_{1}.$ It is
obvious that an accurate computation of the estimate $\widehat{\gamma}%
_{1}\left(  k,k^{\prime}\right)  $ requires good choices of both $k$ and
$k^{\prime}.$ However from a practical point of view, it is rather unusual in
extreme value analysis to handle two distinct sample fractions simultaneously,
which is mentioned by \cite{GS2014} in their conclusion as well. For this
reason, we consider, in the present work, the situation when $k=k^{\prime}$
rather than $k/k^{\prime}\rightarrow1.$ Thus, we obtain an estimator%
\begin{equation}
\widehat{\gamma}_{1}:=\widehat{\gamma}_{1}\left(  k\right)  =k^{-1}\frac{%
{\displaystyle\sum\limits_{i=1}^{k}}
\log\dfrac{X_{n-i+1:n}}{X_{n-k:n}}%
{\displaystyle\sum\limits_{i=1}^{k}}
\log\dfrac{Y_{n-i+1:n}}{Y_{n-k:n}}}{%
{\displaystyle\sum\limits_{i=1}^{k}}
\log\dfrac{X_{n-k:n}Y_{n-i+1:n}}{Y_{n-k:n}X_{n-i+1:n}}}, \label{gamma(k)}%
\end{equation}
of simpler form, expressed in terms of a single sample fraction $k$ of
truncated and truncation observations. The number of extreme values used to
compute the optimal estimate value $\widehat{\gamma}_{1}$ may be obtained by
applying one of the various heuristic methods available in the literature such
that, for instance, the algorithm of page 137 in \cite{ReTo7}, which will be
applied in Section \ref{sec3}.\medskip

\noindent The task of establishing the asymptotic normality of $\widehat
{\gamma}_{1}$ is a bit delicate as one has to take into account the dependence
structure of $X$ and $Y.$ The authors of \cite{GS2014} avoided this issue by
putting conditions on the sample fractions $k$ and $k^{\prime}.$ In our case
we require that the joint df $H$ have a stable tail dependence function $\ell$
(see \citeauthor{Huang-1992}, \citeyear{Huang-1992} and \citeauthor{DrHu98},
\citeyear[]{DrHu98}), in the sense that the following limit exists:%
\begin{equation}
\lim_{t\downarrow0}t^{-1}\mathbf{P}\left(  \overline{F}\left(  X\right)  \leq
tx\text{ or }\overline{G}\left(  Y\right)  \leq ty\right)  =:\ell\left(
x,y\right)  , \label{L}%
\end{equation}
for all $x,y\geq0$ such that $\max\left(  x,y\right)  >0.$ Note that the
corresponding tail copula function is defined by
\begin{equation}
\lim_{t\downarrow0}t^{-1}\mathbf{P}\left(  \overline{F}\left(  X\right)  \leq
tx,\text{ }\overline{G}\left(  Y\right)  \leq ty\right)  =:R\left(
x,y\right)  , \label{R}%
\end{equation}
which equals $x+y-\ell\left(  x,y\right)  .$ In on other words, we assume that
$H$ belongs to the domain of attraction of a bivariate extreme value
distribution. This may be split into two sets of conditions, namely conditions
for the convergence of the marginal distributions $\left(  \ref{RV-2}\right)
$ and others for the convergence of the dependence structure $\left(
\ref{L}\right)  .$ For details on this topic, see for instance Section 6.1.2
of \cite{deHF06} and the papers of \cite{Huang-1992}, \cite{ScSt2006},
\cite{EHL-2006}, \cite{deHNePe2008} and \cite{Peng2010}.\medskip

\noindent The rest of the paper is organized as follows.\ In Section
\ref{sec2}, we give our main result which consists in a Gaussian approximation
to $\widehat{\gamma}_{1}$ only by assuming the second-order conditions of
regular variation and the stability of the tail dependence function. A
simulation study is carried out, in Section \ref{sec3}, to illustrate the
performance of $\widehat{\gamma}_{1}.$ Section \ref{sec4} is devoted to an
application, as we derive the asymptotic normality of an excess-of-loss
reinsurance premium estimator. Finally, the proofs are postponed to Section
\ref{sec5} whereas some results that are instrumental to our needs are
gathered in the Appendix.

\section{\textbf{Main results\label{sec2}}}

\noindent Weak approximations of extreme value theory based statistics are
achieved in the second-order framework \citep[see][]{deHS96}. Thus, it seems
quite natural to suppose that both df's $F$ and $G$ satisfy the well-known
second-order condition of regular variation. That is, we assume that for any
$x>0$%
\begin{equation}%
\begin{array}
[c]{c}%
\underset{z\rightarrow\infty}{\lim}\dfrac{1}{A\left(  z\right)  }\left(
\dfrac{U\left(  zx\right)  }{U\left(  z\right)  }-x^{\gamma}\right)
=x^{\gamma}\dfrac{x^{\tau}-1}{\tau},\medskip\\
\underset{z\rightarrow\infty}{\lim}\dfrac{1}{A_{2}\left(  z\right)  }\left(
\dfrac{U_{2}\left(  zx\right)  }{U_{2}\left(  z\right)  }-x^{\gamma_{2}%
}\right)  =x^{\gamma_{2}}\dfrac{x^{\tau_{2}}-1}{\tau_{2}},
\end{array}
\label{second-order}%
\end{equation}
where $U:=\left(  1/\overline{F})\right)  ^{\leftarrow},$\ $U_{2}:=\left(
1/\overline{G})\right)  ^{\leftarrow}$ (with $E^{\leftarrow}\left(  u\right)
:=\inf\left\{  v:E\left(  v\right)  \geq u\right\}  ,$ for $0<u<1,$ denoting
the quantile function pertaining to a function $E),$ $\left\vert A\right\vert
$ and $\left\vert A_{2}\right\vert $ are some regularly varying functions with
negative indices (second-order parameters) $\tau$ and $\tau_{2}$ respectively.

\begin{theorem}
\textbf{\label{Theorem1}}Assume that the second-order regular variation
condition $(\ref{second-order})$ and $(\ref{L})$ hold.$\ $Let $k:=k_{n}$ be a
sequence of integers such that $k\rightarrow\infty,$ $k/n\rightarrow0,$
$\sqrt{k}A\left(  n/k\right)  =O\left(  1\right)  =\sqrt{k}A_{2}\left(
n/k\right)  .\ $Then, there exist two standard Wiener processes $\left\{
W_{i}\left(  t\right)  ,\text{ }t\geq0\right\}  ,$ $i=1,2,$ defined on the
probability space $\left(  \Omega,\mathcal{A},\mathbf{P}\right)  $ with
covariance function $R\left(  \cdot,\cdot\right)  ,$ such that%
\[
\sqrt{k}\left(  \frac{X_{n-k:n}}{U\left(  n/k\right)  }-1\right)  -\gamma
W_{1}\left(  1\right)  =o_{\mathbf{p}}\left(  1\right)  =\sqrt{k}\left(
\frac{Y_{n-k:n}}{U_{2}\left(  n/k\right)  }-1\right)  -\gamma_{2}W_{2}\left(
1\right)  ,
\]
and%
\begin{align*}
&  \sqrt{k}\left(  \widehat{\gamma}_{1}-\gamma_{1}\right)  -\mu\left(
k\right) \\
&  =\int_{0}^{1}t^{-1}\left(  cW_{1}\left(  t\right)  -c_{2}W_{2}\left(
t\right)  \right)  dt-cW_{1}\left(  1\right)  +c_{2}W_{2}\left(  1\right)
+o_{\mathbf{p}}\left(  1\right)  ,
\end{align*}
where $c:=\gamma_{1}^{2}/\gamma,$ $c_{2}:=\gamma_{1}^{2}/\gamma_{2}$ and%
\[
\mu\left(  k\right)  :=\dfrac{c\sqrt{k}A\left(  n/k\right)  }{\gamma\left(
1-\tau\right)  }+\dfrac{c_{2}\sqrt{k}A_{2}\left(  n/k\right)  }{\gamma
_{2}\left(  1-\tau_{2}\right)  }.
\]

\end{theorem}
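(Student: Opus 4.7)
The plan is to write $\widehat{\gamma}_1$ as a smooth function of the two Hill statistics in $(\ref{Hills})$ and then apply a delta-method argument driven by a \emph{joint} Gaussian approximation of these two statistics. Direct simplification of the ratio in $(\ref{gamma(k)})$ gives $\widehat{\gamma}_1=g(\widehat{\gamma}(k),\widehat{\gamma}_2(k))$ with $g(a,b):=ab/(b-a)$, and the identity $\gamma=\gamma_1\gamma_2/(\gamma_1+\gamma_2)$ yields $\gamma_1=g(\gamma,\gamma_2)$ together with $\gamma_2-\gamma=\gamma_2^2/(\gamma_1+\gamma_2)$. The partials of $g$ at $(\gamma,\gamma_2)$ are $\partial_a g=(\gamma_1+\gamma_2)^2/\gamma_2^2$ and $\partial_b g=-\gamma_1^2/\gamma_2^2$; multiplying them by $\gamma$ and $\gamma_2$ reproduces exactly the constants $c=\gamma_1^2/\gamma$ and $c_2=\gamma_1^2/\gamma_2$ of the theorem, so the problem reduces to obtaining a joint Gaussian representation of $(\widehat{\gamma}(k),\widehat{\gamma}_2(k))$ that one can insert into a first-order Taylor expansion of $g$.

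For each Hill statistic individually, the standard second-order Gaussian approximation under $(\ref{second-order})$ gives
\[
\sqrt{k}\bigl(\widehat{\gamma}(k)-\gamma\bigr)=\gamma\Bigl(\int_0^1 t^{-1}W_1(t)\,dt-W_1(1)\Bigr)+\frac{\sqrt{k}A(n/k)}{1-\tau}+o_{\mathbf{p}}(1)
\]
for a standard Wiener process $W_1$, and analogously for $\widehat{\gamma}_2(k)$ in terms of a second Wiener process $W_2$; the companion facts $\sqrt{k}(X_{n-k:n}/U(n/k)-1)=\gamma W_1(1)+o_{\mathbf{p}}(1)$ and $\sqrt{k}(Y_{n-k:n}/U_2(n/k)-1)=\gamma_2 W_2(1)+o_{\mathbf{p}}(1)$ fall out of the same tail empirical (quantile) processes via Vervaat's inversion lemma. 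The decisive difficulty, and in my view the heart of the argument, is to realise $W_1$ and $W_2$ on the \emph{same} probability space with the prescribed cross-covariance $R$. The stable tail dependence assumption $(\ref{L})$ (equivalently $(\ref{R})$) makes the bivariate tail empirical process
\[
(s,t)\longmapsto\frac{1}{k}\sum_{i=1}^{n}\mathbf{1}\bigl\{\overline{F}(X_i)\le ks/n,\ \overline{G}(Y_i)\le kt/n\bigr\}
\]
converge, once centred at $R(s,t)$ and rescaled by $\sqrt{k}$, to a centred Gaussian field whose one-dimensional marginal processes in $s$ and in $t$ are standard Wiener processes linked by $\mathbf{E}\{W_1(s)W_2(t)\}=R(s,t)$. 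A Skorohod/Hungarian-type coupling in the spirit of \cite{Huang-1992} and \cite{EHL-2006} then lets one construct $W_1$ and $W_2$ on $(\Omega,\mathcal{A},\mathbf{P})$ so that all of the above Gaussian approximations hold simultaneously with a single $o_{\mathbf{p}}(1)$ remainder. This is precisely the bivariate coupling that \cite{GS2014} avoided by splitting their analysis into the regimes $k/k'\to 0$ and $k'/k\to 0$.

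Once the joint coupling is in hand, a first-order Taylor expansion
\[
\sqrt{k}\bigl(\widehat{\gamma}_1-\gamma_1\bigr)=\partial_a g\cdot\sqrt{k}(\widehat{\gamma}(k)-\gamma)+\partial_b g\cdot\sqrt{k}(\widehat{\gamma}_2(k)-\gamma_2)+R_n
\]
combined with the identifications $\partial_a g\cdot\gamma=c$ and $\partial_b g\cdot\gamma_2=-c_2$ yields the stated Gaussian limit and, after a short algebraic rearrangement, the bias term $\mu(k)$. The quadratic remainder $R_n$ is $o_{\mathbf{p}}(1)$ because the second partials of $g$ are bounded on any neighbourhood of $(\gamma,\gamma_2)$ that stays away from the singular diagonal $\{a=b\}$, and the strict inequality $\gamma<\gamma_2$ together with the consistency of both Hill statistics confines $(\widehat{\gamma}(k),\widehat{\gamma}_2(k))$ to such a neighbourhood with probability tending to one.
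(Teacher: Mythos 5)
Your proposal is correct and takes essentially the same route as the paper: linearise $\widehat{\gamma}_1=g(\widehat{\gamma},\widehat{\gamma}_2)$ around $(\gamma,\gamma_2)$, then insert a joint Gaussian approximation of the two Hill statistics coming from the weighted tail copula process framework of \cite{EHL-2006}, whose marginals supply the coupled Wiener processes $W_1,W_2$ with cross-covariance $R$. The paper's algebraic split $\widehat{\gamma}_1-\gamma_1=T_{n1}+T_{n2}$ is precisely the finite-difference form of your delta-method expansion (its limiting coefficients are your $c/\gamma$ and $-c_2/\gamma_2$), and the one-dimensional Hill representations you cite as standard are rederived there directly from the marginal tail empirical processes $\alpha_n,\beta_n$, which is how one guarantees that the same coupled $W_1,W_2$ appear throughout.
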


\begin{corollary}
\textbf{\label{cor1}}Under the assumptions of Theorem $\ref{Theorem1},$ we
have
\[
\sqrt{k}\left(  \widehat{\gamma}_{1}-\gamma_{1}\right)  \overset{\mathcal{D}%
}{\rightarrow}\mathcal{N}\left(  \mu,\sigma^{2}\right)  ,\text{ as
}n\rightarrow\infty,
\]
provided that $\sqrt{k}A\left(  n/k\right)  \rightarrow\lambda$ and $\sqrt
{k}A_{2}\left(  n/k\right)  \rightarrow\lambda_{2},$ where%
\[
\mu:=\frac{c\lambda}{\gamma\left(  1-\tau\right)  }+\frac{c_{2}\lambda_{2}%
}{\gamma_{2}\left(  1-\tau_{2}\right)  }\text{ and }\sigma^{2}:=2c^{2}%
+2c_{2}^{2}-2cc_{2}\delta,
\]
with%
\[
\delta=\delta\left(  R\right)  :=\int_{0}^{1}\int_{0}^{1}\frac{R\left(
s,t\right)  }{st}dsdt-\int_{0}^{1}\left(  R\left(  s,1\right)  -R\left(
1,s\right)  \right)  ds+R\left(  1,1\right)  .
\]

\end{corollary}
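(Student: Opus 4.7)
The plan is to deduce this corollary directly from the Gaussian approximation of Theorem \ref{Theorem1}. Under the additional assumptions $\sqrt{k}A(n/k)\to\lambda$ and $\sqrt{k}A_{2}(n/k)\to\lambda_{2}$, the deterministic bias $\mu(k)$ converges to the stated $\mu$. Combining this with the $o_{\mathbf{p}}(1)$ remainder in Theorem \ref{Theorem1} and Slutsky's lemma, it suffices to identify the law of
\[
Z:=\int_{0}^{1}t^{-1}\bigl(cW_{1}(t)-c_{2}W_{2}(t)\bigr)\,dt-cW_{1}(1)+c_{2}W_{2}(1).
\]

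Since $(W_{1},W_{2})$ is a jointly Gaussian centered pair and $Z$ is a bounded linear functional of it, $Z$ is itself a centered Gaussian random variable, so its distribution is determined entirely by $\sigma^{2}:=\mathrm{Var}(Z)$. The proof therefore reduces to an explicit variance computation.

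To carry this out, I would set $A_{i}:=\int_{0}^{1}t^{-1}W_{i}(t)\,dt$ for $i=1,2$, write $Z=cA_{1}-c_{2}A_{2}-cW_{1}(1)+c_{2}W_{2}(1)$, and expand $\mathrm{Var}(Z)$ bilinearly into sixteen pairwise terms. Each term is evaluated by a Fubini exchange using the covariance kernels $\mathrm{Cov}(W_{i}(s),W_{i}(t))=\min(s,t)$ and $\mathrm{Cov}(W_{1}(s),W_{2}(t))=R(s,t)$. The marginal pieces $\mathrm{Var}(A_{i})$ and $\mathrm{Cov}(A_{i},W_{i}(1))$ produce the $c^{2}$ and $c_{2}^{2}$ contributions to $\sigma^{2}$, while the cross-dependence pieces $\int_{0}^{1}\!\int_{0}^{1}R(s,t)/(st)\,ds\,dt$, $\int_{0}^{1}R(s,1)/s\,ds$, $\int_{0}^{1}R(1,t)/t\,dt$, and $R(1,1)$ assemble into the $-2cc_{2}\delta$ term after regrouping.

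The main obstacle is to justify the Fubini interchange and to verify that the integrals appearing in $\delta$ converge near the origin. Both are handled by the elementary bound $0\le R(s,t)\le\min(s,t)$, which follows directly from the definition $(\ref{R})$ since the joint tail probability is dominated by each marginal tail probability. This yields $R(s,t)/(st)\le 1/\max(s,t)$ on $(0,1]^{2}$, which is integrable, and provides analogous control on $R(s,1)/s$ and $R(1,t)/t$ near $0$, so the covariance computation proceeds rigorously with no extra regularity assumption on the tail copula.
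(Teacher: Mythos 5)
Your approach is exactly the paper's own: start from the Gaussian representation of Theorem~\ref{Theorem1}, observe that the bias $\mu(k)\to\mu$, invoke Slutsky, and then compute the variance of the limiting functional $Z$ by bilinear expansion using the kernels $\mathbf{E}[W_i(s)W_i(t)]=\min(s,t)$ and $\mathbf{E}[W_1(s)W_2(t)]=R(s,t)$. The identifications $\mathrm{Var}(A_i)=2$, $\mathrm{Cov}(A_i,W_i(1))=1$ and the Fubini justification via $0\le R(s,t)\le\min(s,t)$ are all correct, and this is precisely the ``elementary calculations'' the paper's one-line proof alludes to.

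The gap sits in the closing assertion that the four cross-dependence integrals ``assemble into the $-2cc_2\delta$ term after regrouping.'' If you actually carry the regrouping through, writing $Z=c\,B_1-c_2\,B_2$ with $B_i:=A_i-W_i(1)$, you find $\mathrm{Var}(B_i)=2-2\cdot 1+1=1$, so the marginal part contributes $c^2+c_2^2$, not the $2c^2+2c_2^2$ printed in the corollary; and the cross part is
\[
-2cc_2\left(\int_{0}^{1}\!\int_{0}^{1}\frac{R(s,t)}{st}\,ds\,dt-\int_{0}^{1}\frac{R(s,1)}{s}\,ds-\int_{0}^{1}\frac{R(1,t)}{t}\,dt+R(1,1)\right),
\]
in which both single integrals carry the $1/s$ (resp.\ $1/t$) weight and enter with the \emph{same} negative sign, whereas the printed $\delta$ has the unweighted $\int_{0}^{1}R(s,1)\,ds$ and $\int_{0}^{1}R(1,s)\,ds$ with \emph{opposite} signs. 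So the integrals you correctly identify do not collapse to the displayed $\delta$, and the printed $\sigma^2=2c^2+2c_2^2-2cc_2\delta$ is not even guaranteed nonnegative (with $c\approx c_2$ and $\delta$ near its bound $4$ it can go negative), whereas the formula your expansion actually yields is bounded below by $(c-c_2)^2\ge 0$ via Cauchy--Schwarz on $\mathrm{Cov}(B_1,B_2)$. You should carry the expansion to the end and record the resulting variance explicitly; a complete proof cannot simply assert agreement with the corollary's displayed formula, which appears to contain a typo that your computation would expose and should reconcile.
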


\begin{remark}
Note that $\sigma^{2}$ is finite. Indeed, the fact that $\ell\left(
x,y\right)  $ is a tail copula function, implies that $\max\left(  x,y\right)
\leq\ell\left(  x,y\right)  \leq x+y$ \citep[see, e.g.,][]{GUS2010} and since
$R\left(  x,y\right)  =x+y-\ell\left(  x,y\right)  ,$ then $0\leq R\left(
x,y\right)  \leq\min\left(  x,y\right)  .$ It follows that%
\[
\int_{0}^{1}\int_{0}^{1}\frac{R\left(  s,t\right)  }{st}dsdt\leq\int_{0}%
^{1}\int_{0}^{1}\frac{\min\left(  s,t\right)  }{st}dsdt=2,
\]%
\[
\int_{0}^{1}R\left(  s,1\right)  ds\leq\frac{1}{2},\text{ }\int_{0}%
^{1}R\left(  1,s\right)  ds\leq\frac{1}{2}\text{ and }R\left(  1,1\right)
\leq1.
\]
Therefore $\left\vert \delta\right\vert \leq4,$ which yields that $\sigma
^{2}<\infty.$
\end{remark}

\noindent The following corollary directly leads to a practical construction
of confidence intervals for the tail index $\gamma_{1}.$

\begin{corollary}
\textbf{\label{cor2}}Under the assumptions of Corollary $\ref{cor1},$ we have%
\[
\frac{\sqrt{k}\left(  \widehat{\gamma}_{1}-\gamma_{1}\right)  -\widehat{\mu}%
}{\widehat{\sigma}}\overset{\mathcal{D}}{\rightarrow}\mathcal{N}\left(
0,1\right)  ,\text{ as }n\rightarrow\infty,
\]
where $\widehat{\mu}=\dfrac{\widehat{c}\widehat{\lambda}}{\widehat{\gamma
}\left(  1-\widehat{\tau}\right)  }+\dfrac{\widehat{c}_{2}\widehat{\lambda
}_{2}}{\widehat{\gamma}_{2}\left(  1-\widehat{\tau}_{2}\right)  }$ and
$\widehat{\sigma}^{2}:=2\widehat{c}^{2}+2\widehat{c}_{2}^{2}-2\widehat
{c}\widehat{c}_{2}\widehat{\delta},$ with
\[
\widehat{c}:=\widehat{\gamma}_{1}^{2}/\widehat{\gamma},\text{ }\widehat{c}%
_{2}:=\widehat{\gamma}_{1}^{2}/\widehat{\gamma}_{2},\text{ }\widehat{\delta
}:=\delta\left(  \widehat{R}\right)  ,
\]%
\[
\widehat{\lambda}:=\sqrt{k}\widehat{\tau}\frac{X_{n-2k:n}-2^{-\widehat{\gamma
}}X_{n-k:n}}{2^{-\widehat{\gamma}}\left(  2^{-\widehat{\tau}}-1\right)
X_{n-k:n}}\text{ and }\widehat{\lambda}_{2}:=\sqrt{k}\widehat{\tau}_{2}%
\frac{Y_{n-2k:n}-2^{-\widehat{\gamma}_{2}}Y_{n-k:n}}{2^{-\widehat{\gamma}_{2}%
}\left(  2^{-\widehat{\tau}_{2}}-1\right)  Y_{n-k:n}}.
\]
Here $\widehat{\gamma}$ and $\widehat{\gamma}_{2}$ are the respective Hill
estimators of $\gamma$ and $\gamma_{2}$ defined in $\left(  \ref{Hills}%
\right)  $ with $k^{\prime}=k,$ $\widehat{\tau}$ (resp. $\widehat{\tau}_{2})$
is one of the estimators of $\tau$ (resp. $\tau_{2})$
\citep[see, e.g.,][]{GP07} and $\widehat{R}$ is a nonparametric estimator of
$R$ given in \cite{Peng2010} by $\widehat{R}\left(  s,t\right)  :=k^{-1}%
\sum_{i=1}^{n}\mathbf{1}\left(  X_{i}\geq X_{n-\left[  ks\right]  :n},\text{
}Y_{i}\geq Y_{n-\left[  kt\right]  :n}\right)  ,$ with $\left[  x\right]  $
standing for the integer part of the real number $x$ and $\mathbf{1}\left(
\cdot\right)  $ for the indicator function.
\end{corollary}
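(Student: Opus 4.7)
The plan is to derive Corollary \ref{cor2} from Corollary \ref{cor1} via Slutsky's lemma. Indeed Corollary \ref{cor1} already gives $\sqrt{k}(\widehat{\gamma}_{1}-\gamma_{1})-\mu \overset{\mathcal{D}}{\rightarrow}\mathcal{N}(0,\sigma^{2})$; writing
\[
\frac{\sqrt{k}(\widehat{\gamma}_{1}-\gamma_{1})-\widehat{\mu}}{\widehat{\sigma}} = \frac{\sqrt{k}(\widehat{\gamma}_{1}-\gamma_{1})-\mu}{\sigma}\cdot\frac{\sigma}{\widehat{\sigma}} - \frac{\widehat{\mu}-\mu}{\widehat{\sigma}},
\]
it suffices to establish the two in-probability convergences $\widehat{\sigma}\overset{\mathbf{P}}{\rightarrow}\sigma$ (with $\sigma>0$) and $\widehat{\mu}-\mu\overset{\mathbf{P}}{\rightarrow} 0$. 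The conclusion is then immediate from Slutsky's lemma and the continuous mapping theorem.

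For the denominator, the consistency $\widehat{\gamma}\overset{\mathbf{P}}{\rightarrow}\gamma$ and $\widehat{\gamma}_{2}\overset{\mathbf{P}}{\rightarrow}\gamma_{2}$ of the Hill estimators is classical under $k\rightarrow\infty,\,k/n\rightarrow 0$, while Theorem \ref{Theorem1} gives $\widehat{\gamma}_{1}\overset{\mathbf{P}}{\rightarrow}\gamma_{1}$. By continuous mapping, $\widehat{c}\overset{\mathbf{P}}{\rightarrow}c$ and $\widehat{c}_{2}\overset{\mathbf{P}}{\rightarrow}c_{2}$. The remaining ingredient is the consistency of $\widehat{\delta}=\delta(\widehat{R})$, for which I will invoke the uniform consistency $\sup_{(s,t)\in[0,1]^{2}}|\widehat{R}(s,t)-R(s,t)|\overset{\mathbf{P}}{\rightarrow}0$ established in \cite{Peng2010}. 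Since the functional $\delta(\cdot)$ consists of two bounded Lebesgue integrals and a point evaluation, whose integrands are dominated via the bound $R(s,t)\leq\min(s,t)$ noted after Corollary \ref{cor1}, dominated convergence transfers the uniform consistency of $\widehat{R}$ into $\widehat{\delta}\overset{\mathbf{P}}{\rightarrow}\delta$, yielding $\widehat{\sigma}^{2}\overset{\mathbf{P}}{\rightarrow}\sigma^{2}$.

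For the centering $\widehat{\mu}$, the consistency of the second-order parameter estimators $\widehat{\tau}$ and $\widehat{\tau}_{2}$ is known for the estimators of \cite{GP07}. The substantive step is then $\widehat{\lambda}\overset{\mathbf{P}}{\rightarrow}\lambda$ (the argument for $\widehat{\lambda}_{2}$ being identical). Applying the second-order condition $(\ref{second-order})$ at $z=n/k$ with $x=1/2$ provides the deterministic expansion
\[
\frac{U(n/(2k))}{U(n/k)} = 2^{-\gamma}+2^{-\gamma}\,\frac{2^{-\tau}-1}{\tau}A(n/k)+o(A(n/k)),
\]
while the Gaussian approximations underpinning Theorem \ref{Theorem1} give $X_{n-k:n}/U(n/k) = 1+O_{\mathbf{p}}(k^{-1/2})$ and $X_{n-2k:n}/U(n/(2k))=1+O_{\mathbf{p}}(k^{-1/2})$. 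Multiplying these, substituting the consistent $\widehat{\gamma}$ and $\widehat{\tau}$, and inverting the algebraic identity defining $\widehat{\lambda}$ will lead to $\widehat{\lambda}=\lambda+o_{\mathbf{p}}(1)$ under the calibration $\sqrt{k}A(n/k)\rightarrow\lambda$. Continuous mapping then yields $\widehat{\mu}\overset{\mathbf{P}}{\rightarrow}\mu$.

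The main obstacle I anticipate is precisely the consistency of $\widehat{\lambda}$: the deterministic bias $2^{-\gamma}(2^{-\tau}-1)A(n/k)/\tau$ and the stochastic fluctuation in the ratio $X_{n-2k:n}/X_{n-k:n}$ are both of order $k^{-1/2}$, so the algebra must be handled carefully to ensure that cross terms, together with the plug-in error $2^{-\widehat{\gamma}}-2^{-\gamma}$, do not survive after multiplication by $\sqrt{k}$. The joint Gaussian coupling of $X_{n-k:n}$ and $X_{n-2k:n}$ supplied by the proof of Theorem \ref{Theorem1} should be the natural tool for absorbing these remainders into an $o_{\mathbf{p}}(1)$ term.
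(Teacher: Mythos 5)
The proposal follows the same route as the paper's own proof: Slutsky's lemma, consistency of each plug-in quantity, and a derivation of $\widehat{\lambda}$ from the second-order condition evaluated at $x=1/2$, $z=n/k$. The paper's proof is considerably terser---it merely exhibits the formula for $\widehat{A}(n/k)$, declares it "an estimate," and stops---so you are fleshing out the paper's sketch, and your treatment of $\widehat{\delta}$ via the uniform consistency of $\widehat{R}$, the bound $R(s,t)\leq\min(s,t)$, and dominated convergence is a reasonable supplement that the paper omits. However, the difficulty you flag as the "main obstacle" is not one that the joint Gaussian coupling will absorb into an $o_{\mathbf{p}}(1)$ term; it is a genuine gap, and it is shared by the paper's proof as well. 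Using the tail quantile process expansion available from the weighted process $\alpha_n$ in the proof of Theorem~\ref{Theorem1}, one has
\[
\frac{X_{n-2k:n}}{X_{n-k:n}}=2^{-\gamma}\Bigl(1+\frac{\gamma\bigl(\tfrac12 W_1(2)-W_1(1)\bigr)}{\sqrt{k}}+\frac{2^{-\tau}-1}{\tau}A(n/k)\Bigr)+o_{\mathbf{p}}\bigl(k^{-1/2}\bigr),
\]
while $2^{-\widehat{\gamma}}=2^{-\gamma}\bigl(1-(\log 2)(\widehat{\gamma}-\gamma)\bigr)+o_{\mathbf{p}}(k^{-1/2})$, and $\sqrt{k}(\widehat{\gamma}-\gamma)$ converges to a nondegenerate Gaussian variable plus the bias $\lambda/(1-\tau)$ by $(\ref{gam})$.

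Substituting into the formula for $\widehat{\lambda}$ and collecting terms, the deterministic parts contribute $\lambda\bigl(1+\tau\log 2/((2^{-\tau}-1)(1-\tau))\bigr)$ and the random parts contribute a nondegenerate Gaussian functional of $W_1$ at the $O_{\mathbf{p}}(1)$ scale; consequently $\widehat{\lambda}$ converges in distribution, not in probability, and certainly not to $\lambda$. The same holds for $\widehat{\lambda}_2$. The step $\widehat{\mu}\overset{\mathbf{P}}{\to}\mu$ therefore fails when $\widehat{\tau}$, $\widehat{\gamma}$, $\widehat{\gamma}_2$ and the order statistics entering $\widehat{\lambda}$, $\widehat{\lambda}_2$ are all evaluated at the same $k$ as the main estimator, as the corollary's statement specifies. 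The standard remedy in this literature (and what Gomes and Pestana themselves do for $\widehat{\tau}$) is to evaluate the second-order quantities at a larger sample fraction $k_1$ with $k/k_1\to 0$, which makes their fluctuations negligible at the $\sqrt{k}$-scale. Neither the statement of Corollary~\ref{cor2}, nor the paper's proof, nor your proposal incorporates such a two-scale device, so to complete the argument one would have to either modify $\widehat{\lambda}$ along those lines or else replace the Slutsky step with a joint weak convergence argument for $\bigl(\sqrt{k}(\widehat{\gamma}_1-\gamma_1),\widehat{\mu},\widehat{\sigma}\bigr)$ and verify that the resulting self-normalised limit is still standard normal, which is not immediate.
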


\section{Simulation study\textbf{\label{sec3}}}

\noindent We carry out a simulation study to illustrate the performance of our
estimator, through two sets of truncated and truncation data, both drawn from
Burr's model. We have%
\[
\mathbf{F}\left(  x\right)  =1-\left(  1+x^{1/\delta}\right)  ^{-\delta
/\gamma_{1}}\text{ and }\mathbf{G}\left(  y\right)  =1-\left(  1+y^{1/\delta
}\right)  ^{-\delta/\gamma_{2}},\text{ }x,y>0,
\]
with $\delta>0$ and $0<\gamma_{1}<\gamma_{2}.$ The second-order parameters of
$\left(  \ref{second-order}\right)  $ are $\tau=-2\gamma/\delta$ and $\tau
_{2}=-\gamma_{2}/\delta.$ The truncation probability is equal to $1-p$ with
$p=\gamma_{2}/\left(  \gamma_{1}+\gamma_{2}\right)  .$ We fix $p=0.7,$ $0.8,$
$0.9$ and $\gamma_{1}=0.6,$ $0.8.\ $The corresponding $\gamma_{2}-$values are
obtained by solving the latter equation. We vary the common size $N$ of both
samples and for each size, we generate $200$ independent replicates. Our
overall results are then taken as the empirical means of the values obtained
in the $200$ repetitions. To determine the optimal number of upper order
statistics used in the computation of $\widehat{\gamma}_{1},$ we apply the
algorithm of page 137 in \cite{ReTo7}.$\medskip$

\noindent This study consists in two parts: point estimation and
$95\%-$confidence interval construction. In the first part, we evaluate the
bias and the root of the mean squared error (rmse) of $\widehat{\gamma}_{1}$
while in the second, we investigate the accuracy of the confidence intervals
of the tail index $\gamma_{1},$ by computing their lengths and coverage
probabilities (denoted by `covpr'). The results of the first part are
summarized in Table \ref{EP}, whereas those of the second are given in Table
\ref{IC}, where `lcb' and `ucb' respectively stand for the lower and upper
confidence bounds. To compute confidence bounds for $\gamma_{1},$ with level
$\left(  1-\zeta\right)  \times100\%$ where $0<\zeta<1,$ from two realizations
$\left(  x_{1},...,x_{n}\right)  $ and $\left(  y_{1},...,y_{n}\right)  $ of
$\left(  X_{1},...,X_{n}\right)  $ and $\left(  Y_{1},...,Y_{n}\right)  $
respectively, we use Corollary \ref{cor2} and proceed as follows$.$

\begin{itemize}
\item Select the optimal sample fraction of top statistics that we denote by
$k^{\ast}.$

\item Compute the corresponding $\gamma_{1}^{\ast}=\widehat{\gamma}_{1}\left(
k^{\ast}\right)  ,$ $\gamma_{2}^{\ast}=\widehat{\gamma}_{2}\left(  k^{\ast
}\right)  ,$ $\gamma^{\ast}=\widehat{\gamma}\left(  k^{\ast}\right)  ,$
$c^{\ast}=\widehat{c}\left(  k^{\ast}\right)  $ and $c_{2}^{\ast}=\widehat
{c}_{2}\left(  k^{\ast}\right)  .$

\item Calculate $\tau^{\ast}=\widehat{\tau}\left(  k^{\ast}\right)  $ and
$\tau_{2}^{\ast}=\widehat{\tau}_{2}\left(  k^{\ast}\right)  $ via one of the
available numerical procedures (see, e.g., \citeauthor{GP07}, \citeyear{GP07})
and then get $\lambda^{\ast}=\widehat{\lambda}\left(  k^{\ast}\right)  ,$
$\lambda_{2}^{\ast}=\widehat{\lambda}_{2}\left(  k^{\ast}\right)  .$

\item Evaluate $\delta^{\ast}=\widehat{\delta}\left(  k^{\ast}\right)  $ by
means of Monte Carlo integration.

\item Compute $\mu^{\ast}=\widehat{\mu}\left(  k^{\ast}\right)  $ and
$\sigma^{\ast}=\sigma\left(  k^{\ast}\right)  .$
\end{itemize}

\noindent At last, the $\left(  1-\zeta\right)  \times100\%-$confidence bounds
for the extreme value index $\gamma_{1}$ are%
\[
\gamma_{1}^{\ast}+\frac{1}{\sqrt{k^{\ast}}}\left(  \mu^{\ast}\pm\sigma^{\ast
}z_{\zeta/2}\right)  ,
\]
where $z_{\zeta/2}$ is the $(1-\zeta/2)-$quantile of the standard normal
rv.$\medskip$

\noindent On the light of the results of both tables, we see that truncation
is the factor that affects most the estimation process of the tail index. As
we would have expected, the smaller the truncation percentage is, the better
and more accurate the estimation is, for both index values and each sample
size. The reason why we don't consider small samples (we start with a size of
$500)$ is that, in extreme-value theory based inference, large samples are
needed in order for the results to be significant. This motivation becomes
more obvious when, in addition, there is truncation.%

\begin{table}[tbp] \centering
\begin{tabular}
[c]{ccccccccccc}\hline
\multicolumn{11}{c}{$p=0.70$}\\\hline
$N$ & \multicolumn{1}{|c}{$n$} & $k$ & $\widehat{\gamma}_{1}$ & bias & rmse &
\multicolumn{1}{||c}{$n$} & $k$ & $\widehat{\gamma}_{1}$ & bias & rmse\\\hline
\multicolumn{1}{r}{$500$} & \multicolumn{1}{|r}{$350$} &
\multicolumn{1}{r}{$15$} & \multicolumn{1}{r}{$0.515$} &
\multicolumn{1}{r}{$-0.084$} & \multicolumn{1}{r|}{$0.299$} &
\multicolumn{1}{||r}{$349$} & \multicolumn{1}{r}{$15$} &
\multicolumn{1}{r}{$0.673$} & \multicolumn{1}{r}{$-0.127$} &
\multicolumn{1}{r}{$0.356$}\\
\multicolumn{1}{r}{$1000$} & \multicolumn{1}{|r}{$701$} &
\multicolumn{1}{r}{$35$} & \multicolumn{1}{r}{$0.555$} &
\multicolumn{1}{r}{$-0.044$} & \multicolumn{1}{r}{$0.264$} &
\multicolumn{1}{||r}{$699$} & \multicolumn{1}{r}{$32$} &
\multicolumn{1}{r}{$0.704$} & \multicolumn{1}{r}{$-0.095$} &
\multicolumn{1}{r}{$0.307$}\\
\multicolumn{1}{r}{$1500$} & \multicolumn{1}{|r}{$1049$} &
\multicolumn{1}{r}{$50$} & \multicolumn{1}{r}{$0.554$} &
\multicolumn{1}{r}{$-0.046$} & \multicolumn{1}{r|}{$0.212$} &
\multicolumn{1}{||r}{$1049$} & \multicolumn{1}{r}{$51$} &
\multicolumn{1}{r}{$0.751$} & \multicolumn{1}{r}{$-0.049$} &
\multicolumn{1}{r}{$0.259$}\\\hline
\multicolumn{11}{c}{$p=0.80$}\\\hline
\multicolumn{1}{r}{$500$} & \multicolumn{1}{|r}{$400$} &
\multicolumn{1}{r}{$18$} & \multicolumn{1}{r}{$0.521$} &
\multicolumn{1}{r}{$-0.079$} & \multicolumn{1}{r}{$0.233$} &
\multicolumn{1}{||r}{$400$} & \multicolumn{1}{r}{$18$} &
\multicolumn{1}{r}{$0.723$} & \multicolumn{1}{r}{$-0.077$} &
\multicolumn{1}{r}{$0.351$}\\
\multicolumn{1}{r}{$1000$} & \multicolumn{1}{|r}{$801$} &
\multicolumn{1}{r}{$43$} & \multicolumn{1}{r}{$0.566$} &
\multicolumn{1}{r}{$-0.034$} & \multicolumn{1}{r}{$0.181$} &
\multicolumn{1}{||r}{$799$} & \multicolumn{1}{r}{$40$} &
\multicolumn{1}{r}{$0.713$} & \multicolumn{1}{r}{$-0.087$} &
\multicolumn{1}{r}{$0.273$}\\
\multicolumn{1}{r}{$1500$} & \multicolumn{1}{|r}{$1198$} &
\multicolumn{1}{r}{$64$} & \multicolumn{1}{r}{$0.566$} &
\multicolumn{1}{r}{$-0.033$} & \multicolumn{1}{r}{$0.145$} &
\multicolumn{1}{||r}{$1200$} & \multicolumn{1}{r}{$64$} &
\multicolumn{1}{r}{$0.752$} & \multicolumn{1}{r}{$-0.048$} &
\multicolumn{1}{r}{$0.203$}\\\hline
\multicolumn{11}{c}{$p=0.90$}\\\hline
\multicolumn{1}{r}{$500$} & \multicolumn{1}{|r}{$450$} &
\multicolumn{1}{r}{$22$} & \multicolumn{1}{r}{$0.547$} &
\multicolumn{1}{r}{$-0.053$} & \multicolumn{1}{r}{$0.186$} &
\multicolumn{1}{||r}{$449$} & \multicolumn{1}{r}{$20$} &
\multicolumn{1}{r}{$0.702$} & \multicolumn{1}{r}{$-0.098$} &
\multicolumn{1}{r}{$0.295$}\\
\multicolumn{1}{r}{$1000$} & \multicolumn{1}{|r}{$900$} &
\multicolumn{1}{r}{$45$} & \multicolumn{1}{r}{$0.558$} &
\multicolumn{1}{r}{$-0.042$} & \multicolumn{1}{r}{$0.148$} &
\multicolumn{1}{||r}{$900$} & \multicolumn{1}{r}{$49$} &
\multicolumn{1}{r}{$0.747$} & \multicolumn{1}{r}{$-0.053$} &
\multicolumn{1}{r}{$0.189$}\\
\multicolumn{1}{r}{$1500$} & \multicolumn{1}{|r}{$1349$} &
\multicolumn{1}{r}{$76$} & \multicolumn{1}{r}{$0.577$} &
\multicolumn{1}{r}{$-0.023$} & \multicolumn{1}{r}{$0.118$} &
\multicolumn{1}{||r}{$1348$} & \multicolumn{1}{r}{$77$} &
\multicolumn{1}{r}{$0.755$} & \multicolumn{1}{r}{$-0.045$} &
\multicolumn{1}{r}{$0.151$}\\\hline
\multicolumn{1}{l}{} & \multicolumn{1}{l}{} & \multicolumn{1}{l}{} &
\multicolumn{1}{l}{} & \multicolumn{1}{l}{} & \multicolumn{1}{l}{} &
\multicolumn{1}{l}{} & \multicolumn{1}{l}{} & \multicolumn{1}{l}{} &
\multicolumn{1}{l}{} & \multicolumn{1}{l}{}%
\end{tabular}
\caption{Point estimation of the tail index based on 200 samples from randomly right-truncated
Burr population with shape parameter 0.6 (left panel) and 0.8 (right panel)}\label{EP}%
\end{table}%
%

\begin{table}[tbp] \centering
\begin{tabular}
[c]{cccccll}\hline
\multicolumn{7}{c}{$p=0.70$}\\\hline
$N$ & \multicolumn{1}{|c}{lcb$-$ucb} & covpr & length &
\multicolumn{1}{||c}{lcb$-$ucb} & covpr & length\\\hline
\multicolumn{1}{r}{$500$} & \multicolumn{1}{r}{$0.226-0.993$} &
\multicolumn{1}{r}{$0.94$} & \multicolumn{1}{r}{$0.767$} &
\multicolumn{1}{||r}{$0.294-1.199$} & \multicolumn{1}{r}{$0.92$} &
\multicolumn{1}{r}{$0.905$}\\
\multicolumn{1}{r}{$1000$} & \multicolumn{1}{r}{$0.319-0.851$} &
\multicolumn{1}{r}{$0.94$} & \multicolumn{1}{r}{$0.532$} &
\multicolumn{1}{||r}{$0.428-1.099$} & \multicolumn{1}{r}{$0.91$} &
\multicolumn{1}{r}{$0.671$}\\
\multicolumn{1}{r}{$1500$} & \multicolumn{1}{r}{$0.396-0.801$} &
\multicolumn{1}{r}{$0.90$} & \multicolumn{1}{r}{$0.405$} &
\multicolumn{1}{||r}{$0.516-1.043$} & \multicolumn{1}{r}{$0.89$} &
\multicolumn{1}{r}{$0.527$}\\\hline
\multicolumn{7}{c}{$p=0.80$}\\\hline
\multicolumn{1}{r}{$500$} & \multicolumn{1}{|r}{$0.242-0.880$} &
\multicolumn{1}{r}{$0.93$} & \multicolumn{1}{r}{$0.638$} &
\multicolumn{1}{||r}{$0.313-1.158$} & \multicolumn{1}{r}{$0.93$} &
\multicolumn{1}{r}{$0.845$}\\
\multicolumn{1}{r}{$1000$} & \multicolumn{1}{|r}{$0.376-0.790$} &
\multicolumn{1}{r}{$0.92$} & \multicolumn{1}{r}{$0.414$} &
\multicolumn{1}{||r}{$0.497-1.041$} & \multicolumn{1}{r}{$0.92$} &
\multicolumn{1}{r}{$0.544$}\\
\multicolumn{1}{r}{$1500$} & \multicolumn{1}{|r}{$0.436-0.759$} &
\multicolumn{1}{r}{$0.91$} & \multicolumn{1}{r}{$0.323$} &
\multicolumn{1}{||r}{$0.576-1.003$} & \multicolumn{1}{r}{$0.91$} &
\multicolumn{1}{r}{$0.427$}\\\hline
\multicolumn{7}{c}{$p=0.90$}\\\hline
\multicolumn{1}{r}{$500$} & \multicolumn{1}{|r}{$0.317-0.820$} &
\multicolumn{1}{r}{$0.90$} & \multicolumn{1}{r}{$0.503$} &
\multicolumn{1}{||r}{$0.438-1.085$} & \multicolumn{1}{r}{$0.92$} &
\multicolumn{1}{r}{$0.647$}\\
\multicolumn{1}{r}{$1000$} & \multicolumn{1}{|r}{$0.408-0.750$} &
\multicolumn{1}{r}{$0.91$} & \multicolumn{1}{r}{$0.342$} &
\multicolumn{1}{||r}{$0.540-0.998$} & \multicolumn{1}{r}{$0.90$} &
\multicolumn{1}{r}{$0.458$}\\
\multicolumn{1}{r}{$1500$} & \multicolumn{1}{|r}{$0.445-0.727$} &
\multicolumn{1}{r}{$0.90$} & \multicolumn{1}{r}{$0.282$} &
\multicolumn{1}{||r}{$0.591-0.965$} & \multicolumn{1}{r}{$0.90$} &
\multicolumn{1}{r}{$0.374$}\\\hline
\multicolumn{1}{l}{} & \multicolumn{1}{l}{} & \multicolumn{1}{l}{} &
\multicolumn{1}{l}{} & \multicolumn{1}{l}{} &  &
\end{tabular}
\caption{Accuracy of $95\%$-confidence intervals for the tail index based on 200
samples from randomly right-truncated Burr population with shape parameter 0.6 (left panel) and 0.8 (right panel)}\label{IC}%
\end{table}%

\section{\textbf{Application: excess-of-loss reinsurance premium
estimation\label{sec4}}}

\noindent As an application of Theorem \ref{Theorem1}, we derive the
asymptotic normality of an estimator of the excess-of-loss reinsurance premium
obtained with truncated data. Our choice is motivated mainly by two reasons.
The first one is that reinsurance is a very important field of application of
extreme value theory and the second is that data sets with truncated extreme
observations may very likely be encountered in insurance. The aim of
reinsurance, where emphasis lies on modelling extreme events, is to protect an
insurance company, called ceding company, against losses caused by excessively
large claims and/or a surprisingly high number of moderate claims. Nice
discussions on the use of extreme value theory in the actuarial world
(especially in the reinsurance industry) can be found, for instance, in
\cite{EKM-1997}, a major textbook on the subject, and \cite{BeGeS04}.\medskip

\noindent Let $\mathbf{X}_{1},...,\mathbf{X}_{n}$ $\left(  n\geq1\right)  $ be
$n$ individual claim amounts of an insured heavy-tailed loss $\mathbf{X}$ with
finite mean. A Pareto-like distribution, with tail index greater than or equal
to $1,$ does not have finite mean. Hence, assuming that $\mathbf{E}\left[
\mathbf{X}\right]  $ exists necessarily implies that $\gamma_{1}<1.$ In the
excess-of-loss reinsurance treaty, the ceding company covers claims that do
not exceed a (high) number $u\geq0,$ called retention level, while the
reinsurer pays the part $(\mathbf{X}_{i}-u)_{+}:=\max\left(  0,\mathbf{X}%
_{i}-u\right)  $ of each claim exceeding $u.$ The net premium for the layer
from $u$ to infinity is defined as follows:%
\[
\Pi=\Pi(u):=\mathbf{E}\left[  (\mathbf{X}-u)_{+}\right]  =\int_{u}^{\infty
}\overline{\mathbf{F}}\left(  x\right)  dx,
\]
which may be rewritten into $\Pi=u\overline{\mathbf{F}}\left(  u\right)
\int_{1}^{\infty}\overline{\mathbf{F}}\left(  ux\right)  /\overline
{\mathbf{F}}\left(  u\right)  dx.$ By using the well-known Karamata theorem
(see, for instance, Theorem B.1.5 in \citeauthor{deHF06},
\citeyear[page 363]{deHF06}) we have, for large $u,$%
\[
\Pi\sim\frac{\gamma_{1}}{1-\gamma_{1}}u\overline{\mathbf{F}}\left(  u\right)
,\text{ }0<\gamma_{1}<1.
\]
As we see, a semi-parametric estimator for\textbf{ }$\mathbf{F}$ is needed in
order to estimate the premium $\Pi.$ To this end, let us define%
\[
C\left(  x\right)  :=\mathbf{P}\left(  \mathbf{X}\leq x\leq\mathbf{Y\mid
X}\leq\mathbf{Y}\right)  =\mathbf{P}\left(  X\leq x\leq Y\right)  ,
\]
with $\mathbf{Y}$ being the truncation rv introduced in Section \ref{sec1}.
This quantity $C$ is very crucial as it plays a prominent role is the
statistical inference under random truncation. In other words, we have%
\[
C\left(  x\right)  =p^{-1}\mathbf{F}\left(  x\right)  \overline{\mathbf{G}%
}\left(  x\right)  =F\left(  x\right)  -G\left(  x\right)  =\overline
{G}\left(  x\right)  -\overline{F}\left(  x\right)  .
\]
It is worth mentioning that, since $\mathbf{F}$ and $\mathbf{G}$ are
heavy-tailed then their right endpoints are infinite and thus they are equal.
Therefore, from \cite{Wood85}, the functions $\mathbf{F},$ $F$ and $C$ are
linked by%
\[
C\left(  x\right)  d\mathbf{F}\left(  x\right)  =\mathbf{F}\left(  x\right)
dF\left(  x\right)  ,
\]
known as self-consistency equation\citep[see, e.g.,][]{SS09}, whose solution
is%
\begin{equation}
\mathbf{F}\left(  x\right)  =\exp-\Lambda\left(  x\right)  , \label{F}%
\end{equation}
where $\Lambda\left(  x\right)  :=\int_{x}^{\infty}dF\left(  z\right)
/C\left(  z\right)  .$ Replacing $F$ and $C$ by their respective empirical
counterparts $F_{n}\left(  x\right)  :=n^{-1}\sum_{i=1}^{n}\mathbf{1}\left(
X_{i}\leq x\right)  $ (the usual empirical df based on the fully observed
sample $\left(  X_{1},...,X_{n}\right)  $) and $C_{n}\left(  x\right)
:=n^{-1}\sum\limits_{i=1}^{n}\mathbf{1}\left(  X_{i}\leq x\leq Y_{i}\right)
,$ yields the well-known Lynden-Bell product limit estimator
\citep{LB-71}\textbf{\ }of $\mathbf{F,}$%
\begin{equation}
\mathbf{F}_{n}\left(  x\right)  =\exp-\Lambda_{n}\left(  x\right)  ,
\label{Fn}%
\end{equation}
where $\Lambda_{n}\left(  x\right)  :=\int_{x}^{\infty}dF_{n}\left(  z\right)
/C_{n}\left(  z\right)  .$ If there are no ties, $\mathbf{F}_{n}$ may be put
in the form%
\begin{equation}
\mathbf{F}_{n}(x):=\prod_{X_{i:n}>x}\left(  1-\frac{1}{nC_{n}\left(
X_{i:n}\right)  }\right)  . \label{Lynden}%
\end{equation}
Since $\overline{\mathbf{F}}$ is regularly varying at infinity with index
$-1/\gamma_{1},$ then
\[
\overline{\mathbf{F}}\left(  x\right)  \sim\overline{\mathbf{F}}\left(
U\left(  n/k\right)  \right)  \left(  x/U\left(  n/k\right)  \right)
^{-1/\gamma_{1}},\text{ as }x\rightarrow\infty.
\]
This leads us to derive a Weissman-type estimator \citep{Weiss-78}%
\[
\widehat{\overline{\mathbf{F}}}\left(  x\right)  =\left(  \frac{x}{X_{n-k:n}%
}\right)  ^{-1/\widehat{\gamma}_{1}}\overline{\mathbf{F}}_{n}\left(
X_{n-k:n}\right)  ,
\]
for the distribution tail $\overline{\mathbf{F}}$ with truncated data. Note
that%
\[
\mathbf{F}_{n}(X_{n-k:n})=%
{\displaystyle\prod\limits_{i=n-k+1}^{n}}
\left(  1-\frac{1}{nC_{n}\left(  X_{i:n}\right)  }\right)  .
\]
Thus, the distribution tail estimator is of the form%
\[
\widehat{\overline{\mathbf{F}}}\left(  x\right)  :=\left(  \frac{x}{X_{n-k:n}%
}\right)  ^{-1/\widehat{\gamma}_{1}}\left\{  1-%
{\displaystyle\prod\limits_{i=1}^{k}}
\left(  1-\frac{1}{nC_{n}\left(  X_{n-i+1:n}\right)  }\right)  \right\}  .
\]
Consequently, we define an estimator $\widehat{\Pi}_{n}$ to the premium $\Pi$
as follows:%
\[
\widehat{\Pi}_{n}:=\frac{\widehat{\gamma}_{1}}{1-\widehat{\gamma}_{1}%
}X_{n-k:n}\left(  \frac{u}{X_{n-k:n}}\right)  ^{1-1/\widehat{\gamma}_{1}%
}\left\{  1-%
{\displaystyle\prod\limits_{i=1}^{k}}
\left(  1-\frac{1}{nC_{n}\left(  X_{n-i+1:n}\right)  }\right)  \right\}  .
\]
This estimator coincides with that proposed and applied to the Norwegian fire
data by \cite{BMD}, in the non truncation case. Prior to establish the
asymptotic normality of $\widehat{\Pi}_{n}$ (Theorem \ref{Theorem3}), we give,
in the following basic result, an asymptotic representation to the Lynden-bell
estimator $\mathbf{F}_{n}$ (in $X_{n-k:n}).$ This result will of prime
importance in the study of the limiting behaviors of many statistics based on
truncated data exhibiting extreme values.

\begin{theorem}
\label{Theorem2}Assume that the second-order conditions of regular variation
$(\ref{second-order})$ hold with $\gamma_{1}<\gamma_{2}.\ $Let $k:=k_{n}$ be a
sequence of integers such that $k\rightarrow\infty,$ $k/n\rightarrow0.$ Then%
\[
\sqrt{k}\left(  \frac{\overline{\mathbf{F}}_{n}\left(  X_{n-k:n}\right)
}{\overline{\mathbf{F}}\left(  X_{n-k:n}\right)  }-1\right)  =\frac{\gamma
_{1}\gamma_{2}}{\left(  \gamma_{1}+\gamma_{2}\right)  ^{2}}\int_{0}%
^{1}s^{-\gamma/\gamma_{2}-1}W_{1}\left(  s\right)  ds+\frac{\gamma_{2}}%
{\gamma_{1}+\gamma_{2}}W_{1}\left(  1\right)  +o_{\mathbf{p}}\left(  1\right)
.
\]
Consequently,%
\[
\sqrt{k}\left(  \frac{\overline{\mathbf{F}}_{n}\left(  X_{n-k:n}\right)
}{\overline{\mathbf{F}}\left(  X_{n-k:n}\right)  }-1\right)  \overset
{\mathcal{D}}{\rightarrow}\mathcal{N}\left(  0,\frac{\gamma_{2}^{2}}%
{\gamma_{2}^{2}-\gamma_{1}^{2}}\right)  ,\text{ as }n\rightarrow\infty.
\]

\end{theorem}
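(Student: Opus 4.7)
The plan is to linearize the ratio around the cumulative hazard-type functional $\Lambda(x)=-\log\mathbf{F}(x)=\int_x^\infty dF(z)/C(z)$, whose empirical counterpart is $\Lambda_n(x)=\int_x^\infty dF_n(z)/C_n(z)$. Since $X_{n-k:n}\to\infty$ in probability, both $\Lambda(X_{n-k:n})$ and $\Lambda_n(X_{n-k:n})$ tend to $0$, so a Taylor expansion of $u\mapsto 1-e^{-u}$ at the upper order statistic yields
$$\frac{\overline{\mathbf{F}}_n(X_{n-k:n})}{\overline{\mathbf{F}}(X_{n-k:n})}-1=\frac{\Lambda_n(X_{n-k:n})-\Lambda(X_{n-k:n})}{\Lambda(X_{n-k:n})}+o_{\mathbf{p}}\!\left(k^{-1/2}\right),$$
the remainder being negligible because $\Lambda(X_{n-k:n})\sim\overline{\mathbf{F}}(X_{n-k:n})$ is of order $(k/n)^{\gamma/\gamma_1}=o(1)$, whereas the leading quotient will be tight on the $\sqrt{k}$ scale. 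The task thus reduces to obtaining a Gaussian representation for $\sqrt{k}(\Lambda_n-\Lambda)(X_{n-k:n})/\Lambda(X_{n-k:n})$.

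For that, I would use the splitting
$$\Lambda_n(x)-\Lambda(x)=\int_x^\infty\frac{d(F_n-F)(z)}{C(z)}-\int_x^\infty\frac{C_n(z)-C(z)}{C(z)\,C_n(z)}\,dF_n(z),$$
together with $C(z)=\overline{G}(z)-\overline{F}(z)$, $C_n(z)=\overline{G}_n(z^-)-\overline{F}_n(z)$ and the Karamata asymptotics $C(z)\sim p^{-1}\overline{\mathbf{G}}(z)$ and $\overline{F}(z)\sim(\gamma/\gamma_1)\,p^{-1}\overline{\mathbf{F}}(z)\overline{\mathbf{G}}(z)$. After the change of variable $z=U(n/k)\,t^{-\gamma}$, so that $t=1$ corresponds to the level $z\approx X_{n-k:n}$, each of the two integrals above becomes an explicit functional of the normalized tail empirical processes
$$e_{1,n}(t):=\sqrt{k}\left(\tfrac{n}{k}\overline{F}_n\!\left(U(n/k)\,t^{-\gamma}\right)-t\right),\quad e_{2,n}(t):=\sqrt{k}\left(\tfrac{n}{k}\overline{G}_n\!\left(U_2(n/k)\,t^{-\gamma_2}\right)-t\right),$$
which, under the second-order conditions with $\sqrt{k}A(n/k)=O(1)=\sqrt{k}A_2(n/k)$, can be coupled to two Wiener processes $W_1,W_2$ uniformly on compact subsets of $(0,\infty)$ by standard tail-empirical-process arguments already invoked in Theorem~\ref{Theorem1}.

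The main obstacle, and where the hypothesis $\gamma_1<\gamma_2$ really bites, is to show that after division by $\Lambda(X_{n-k:n})$ the $W_2$-contribution carried by $\overline{G}_n-\overline{G}$ inside $C_n-C$ drops to $o_{\mathbf{p}}(1)$, so that only $X$-side fluctuations survive in the limit. This is structural: because $C(z)\sim p^{-1}\overline{\mathbf{G}}(z)$ while the identity $dF(z)=p^{-1}\overline{\mathbf{G}}(z)\,d\mathbf{F}(z)$ holds exactly, the $\overline{\mathbf{G}}$-factor cancels out of the ratio $dF/C$, and the remaining Karamata computation combined with a Fubini exchange on the $X$-piece produces, after normalization by $\Lambda(X_{n-k:n})\sim\overline{\mathbf{F}}(X_{n-k:n})$, both announced summands: the boundary evaluation at $t=1$ furnishes $\tfrac{\gamma_2}{\gamma_1+\gamma_2}W_1(1)$, whereas integration by parts on the $\int d(F_n-F)/C$ piece furnishes $\tfrac{\gamma_1\gamma_2}{(\gamma_1+\gamma_2)^2}\int_0^1 s^{-\gamma/\gamma_2-1}W_1(s)\,ds$. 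The condition $\gamma_1<\gamma_2$ forces $\gamma/\gamma_2=\gamma_1/(\gamma_1+\gamma_2)<1/2$, which is simultaneously the $L^2$-integrability requirement for $s^{-\gamma/\gamma_2-1}W_1(s)$ on $(0,1]$ and the condition for finiteness of the announced variance $\gamma_2^2/(\gamma_2^2-\gamma_1^2)$; this variance then follows by a direct It\^{o} isometry on the two Gaussian terms.
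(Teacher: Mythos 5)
Your overall plan coincides with the paper's: linearize via $1-e^{-u}\approx u$ to reduce the problem to $\sqrt{k}\bigl(\Lambda_n(X_{n-k:n})-\Lambda(X_{n-k:n})\bigr)/\Lambda(X_{n-k:n})$, split $\Lambda_n-\Lambda$ into the two pieces $\int d(F_n-F)/C$ and $-\int (C_n-C)/(C_nC)\,dF_n$, change variables so that the first piece becomes a functional of the uniform tail-empirical process, apply the weighted weak approximation of Einmahl, de Haan and Li, and then compute the variance by an It\^o/covariance calculation. The variance computation you indicate is the correct one, and the $\gamma/\gamma_2<1/2$ remark (equivalently $\gamma_1<\gamma_2$) is indeed the finiteness requirement for $\int_0^1 s^{-\gamma/\gamma_2-1}W_1(s)\,ds$.

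Two caveats. First, your justification that the $C_n-C$ contribution carries no $W_2$ term in the limit --- the ``cancellation of $\overline{\mathbf{G}}$ in $dF/C$'' --- is a statement about the \emph{population} identity $dF/C=d\mathbf{F}/\mathbf{F}$; it does not by itself control the \emph{empirical} fluctuation $C_n-C$, which genuinely contains a $\overline{G}_n-\overline{G}$ piece. The paper's actual mechanism is different and you would need it (or an equivalent): bound $1/C_n$ by $\theta_n/C$ where $\theta_n:=\sup_{X_{1:n}\le z\le X_{n:n}}C(z)/C_n(z)=O_{\mathbf{p}}(1)$ (Stute--Wang), split $C_n-C$ into $F$- and $G$-fluctuation integrals, change variables, and use the two deterministic scalings proved in the Appendix, namely $t\,\Lambda(U(t))\,C(U(t))\to\gamma_1/\gamma$ and $t^{-1}C(U(t))\to0$ (the latter is where $\gamma_1<\gamma_2$ enters), so that the normalized integrals are $O_{\mathbf{p}}(1)$ multiplied by $\overline{F}(U(n/k))/C(U(n/k))\to 0$. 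As written your argument does not yet establish negligibility. Second, the conditions $\sqrt{k}A(n/k)=O(1)=\sqrt{k}A_2(n/k)$ that you import from Theorem~\ref{Theorem1} are not hypotheses of Theorem~\ref{Theorem2} and are not needed here: the tail-empirical coupling $\alpha_n\approx W_1$ is independent of the second-order rate, and the quantity $\overline{\mathbf{F}}_n(X_{n-k:n})/\overline{\mathbf{F}}(X_{n-k:n})-1$ is self-centering, so no bias term survives. Also, your $o_{\mathbf{p}}(k^{-1/2})$ claim for the Taylor remainder is overstated; what is true and sufficient is that the expansion holds with a multiplicative $(1+o_{\mathbf{p}}(1))$ factor, which is harmless once the leading term is shown to be $O_{\mathbf{p}}(1)$.
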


\begin{remark}
\label{Rmrk}Under the assumptions of Theorem \ref{Theorem2}, we
have$\mathbf{\ }$
\[
\frac{\overline{\mathbf{F}}_{n}\left(  X_{n-k:n}\right)  }{\overline
{\mathbf{F}}\left(  X_{n-k:n}\right)  }\overset{\mathbf{p}}{\rightarrow
}1,\text{ as }n\rightarrow\infty.
\]

\end{remark}

\noindent To establish the asymptotic normality $\widehat{\Pi}_{n},$ we
require the second-order regular variation to $\mathbf{F.}$ That is, we
suppose that%
\begin{equation}
\underset{t\rightarrow\infty}{\lim}\dfrac{1}{\mathbf{A}\left(  t\right)
}\left(  \dfrac{\overline{\mathbf{F}}\left(  tx\right)  }{\overline
{\mathbf{F}}\left(  t\right)  }-x^{-1/\gamma_{1}}\right)  =x^{-1/\gamma_{1}%
}\dfrac{x^{\tau_{1}/\gamma_{1}}-1}{\tau_{1}\gamma_{1}},
\label{secon-orderFbold}%
\end{equation}
for any $x>0,$ where $\left\vert \mathbf{A}\right\vert $ is some regularly
varying function at infinity with index $\tau_{1}/\gamma_{1},$ where $\tau
_{1}<0$ is the second-order parameter. \ For asymptotic theory requirements,
one has to specify the relation between the retention level $u$ and the
quantile $U\left(  n/k\right)  .$ Indeed, as mentioned in \cite{VanBer},
amongst others, extreme value methodology typically applies to $u$ values for
which $\mathbf{P}(\mathbf{X}>u)=O(1/n),$ hence $\mathbf{P}(X>u)=O(1/n).$ This
leads to situate $u=u_{n}$ with respect to $U\left(  n/k\right)  $ so that,
for large\textbf{ }$n,$ the quotient $u/U\left(  n/k\right)  $ tends to some
constant $a.$

\begin{theorem}
\label{Theorem3}Assume that the second-order regular variation conditions
$(\ref{secon-orderFbold})$ hold with $0<\gamma_{1}<1$ and $\gamma_{1}%
<\gamma_{2}.$ Let $k:=k_{n}$ be a sequence of integers such that
$k\rightarrow\infty,$ $k/n\rightarrow0$ and $\sqrt{k}\mathbf{A}\left(
U\left(  n/k\right)  \right)  \rightarrow\lambda^{\ast}<\infty.$ Then,
whenever $u/U\left(  n/k\right)  \rightarrow a,$ we have as $n\rightarrow
\infty,$%
\[
\frac{\sqrt{k}\left(  \widehat{\Pi}_{n}-\Pi\right)  }{\left(  u/U\left(
n/k\right)  \right)  ^{1-1/\gamma_{1}}U\left(  n/k\right)  \overline
{\mathbf{F}}\left(  U\left(  n/k\right)  \right)  }\overset{\mathcal{D}%
}{\rightarrow}\mathcal{N}\left(  \frac{\lambda^{\ast}}{\left(  \gamma
_{1}-1-\tau_{1}\right)  \left(  \gamma_{1}-1\right)  },\sigma^{\ast2}\right)
,
\]
where%
\[
\sigma^{\ast2}:=\zeta^{2}\sigma^{2}+\frac{\gamma_{1}^{2}\gamma_{2}^{2}%
}{\left(  \gamma_{2}^{2}-\gamma_{1}^{2}\right)  \left(  1-\gamma_{1}\right)
^{2}}+2\frac{\gamma\gamma_{1}\zeta\delta^{\ast}}{1-\gamma_{1}},
\]
with $\sigma^{2}$ as defined in Corollary \ref{cor1}, $\zeta:=\left(  \left(
1-\gamma_{1}\right)  \log a+\gamma_{1}\right)  /\left(  \gamma_{1}\left(
1-\gamma_{1}\right)  ^{2}\right)  $ and%
\begin{align*}
\delta^{\ast} &  :=\dfrac{c}{\gamma\gamma_{2}}+\dfrac{c_{2}}{\gamma_{1}%
}\left(  R\left(  1,1\right)  -\int_{0}^{1}\dfrac{R\left(  1,t\right)  }%
{t}dt\right)  \\
&  +\dfrac{c_{2}}{\gamma_{1}+\gamma_{2}}\left(  \int_{0}^{1}\dfrac{R\left(
s,1\right)  }{s^{\gamma/\gamma_{2}+1}}ds-%
{\displaystyle\int_{0}^{1}}
{\displaystyle\int_{0}^{1}}
\dfrac{R\left(  s,t\right)  }{ts^{^{\gamma/\gamma_{2}+1}}}dsdt\right)  .
\end{align*}

\end{theorem}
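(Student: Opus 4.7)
The plan is to normalize by $d_n := (u/U(n/k))^{1-1/\gamma_1} U(n/k)\overline{\mathbf{F}}(U(n/k))$ and to decompose
\[
\frac{\widehat{\Pi}_n}{d_n} = \frac{\widehat{\gamma}_1}{1-\widehat{\gamma}_1}\,\frac{X_{n-k:n}}{U(n/k)}\,\Bigl(\frac{u}{U(n/k)}\Bigr)^{1/\gamma_1-1/\widehat{\gamma}_1}\Bigl(\frac{X_{n-k:n}}{U(n/k)}\Bigr)^{1/\widehat{\gamma}_1-1}\,\frac{\overline{\mathbf{F}}_n(X_{n-k:n})}{\overline{\mathbf{F}}(X_{n-k:n})}\,\frac{\overline{\mathbf{F}}(X_{n-k:n})}{\overline{\mathbf{F}}(U(n/k))},
\]
so that the problem reduces to expanding each factor around $1$. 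In parallel, Karamata's theorem together with the second-order condition $(\ref{secon-orderFbold})$ on $\overline{\mathbf{F}}$ gives $\Pi/d_n = \gamma_1/(1-\gamma_1) + k^{-1/2}\lambda^\ast/((1-\gamma_1)(1-\gamma_1-\tau_1)) + o(k^{-1/2})$, which will produce exactly the bias $\lambda^\ast/((\gamma_1-1-\tau_1)(\gamma_1-1))$ displayed in the limit.

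The next step is to plug in the available asymptotic representations for the four factors. Theorem \ref{Theorem1} supplies $\sqrt{k}(\widehat{\gamma}_1-\gamma_1) = \Gamma_1 + o_{\mathbf{p}}(1)$, with $\Gamma_1 := \int_0^1 t^{-1}(cW_1(t)-c_2 W_2(t))\,dt - cW_1(1) + c_2 W_2(1)$, together with $\sqrt{k}(X_{n-k:n}/U(n/k)-1) = \gamma W_1(1) + o_{\mathbf{p}}(1)$. Theorem \ref{Theorem2} yields $\sqrt{k}(\overline{\mathbf{F}}_n(X_{n-k:n})/\overline{\mathbf{F}}(X_{n-k:n})-1) = \Gamma_2 + o_{\mathbf{p}}(1)$, with $\Gamma_2$ the Gaussian functional exhibited there; and applying $(\ref{secon-orderFbold})$ at the random argument $X_{n-k:n}$ shows that $\overline{\mathbf{F}}(X_{n-k:n})/\overline{\mathbf{F}}(U(n/k)) = (X_{n-k:n}/U(n/k))^{-1/\gamma_1}(1+o_{\mathbf{p}}(k^{-1/2}))$. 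A first-order Taylor expansion of each power factor, together with $u/U(n/k)\to a$ and the cancellation of the $\gamma W_1(1)$ contributions that appear with opposite signs in $X_{n-k:n}/U(n/k)$ paired with the exponents $1/\widehat{\gamma}_1-1$ and $-1/\gamma_1$, then consolidates all $\gamma_1$-perturbations into the single coefficient $\zeta = ((1-\gamma_1)\log a+\gamma_1)/(\gamma_1(1-\gamma_1)^2)$ multiplying $\Gamma_1$, while the Lynden--Bell factor retains its prefactor $\gamma_1/(1-\gamma_1)$ in front of $\Gamma_2$.

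Combining all pieces I expect the linearization
\[
\frac{\sqrt{k}(\widehat{\Pi}_n-\Pi)}{d_n} = \frac{\lambda^\ast}{(\gamma_1-1-\tau_1)(\gamma_1-1)} + \zeta\,\Gamma_1 + \frac{\gamma_1}{1-\gamma_1}\,\Gamma_2 + o_{\mathbf{p}}(1),
\]
so that only the variance of the Gaussian part $\zeta\Gamma_1 + \gamma_1(1-\gamma_1)^{-1}\Gamma_2$ needs to be matched with $\sigma^{\ast 2}$. The diagonal contributions $\zeta^2\sigma^2$ and $\gamma_1^2\gamma_2^2/((\gamma_2^2-\gamma_1^2)(1-\gamma_1)^2)$ follow directly from Corollary \ref{cor1} and Theorem \ref{Theorem2}, so the task reduces to verifying the identity $\mathrm{Cov}(\Gamma_1,\Gamma_2) = \gamma\,\delta^\ast$. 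I expect this last step to be the main obstacle: it is a lengthy but mechanical computation in which one expands the product $\Gamma_1\Gamma_2$, evaluates each of the resulting single and double integrals by Fubini using the identities $\mathbf{E}[W_i(s)W_i(t)]=\min(s,t)$ and $\mathbf{E}[W_1(s)W_2(t)]=R(s,t)$, and regroups the pieces to reproduce the three integrals appearing in the definition of $\delta^\ast$. The bookkeeping is delicate because $W_1$ enters both $\Gamma_1$ and $\Gamma_2$ while $W_2$ enters only $\Gamma_1$, so the $R$-dependent cross terms and the $\min(s,t)$-dependent same-process terms must be tracked separately.
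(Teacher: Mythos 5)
Your proposal follows essentially the same route as the paper: normalize by $d_n=(u/U(n/k))^{1-1/\gamma_1}U(n/k)\overline{\mathbf{F}}(U(n/k))$, linearize the estimator term by term using the Gaussian representations from Theorems \ref{Theorem1} and \ref{Theorem2} plus Potter/second-order bounds for the deterministic pieces, observe the cancellation of the $X_{n-k:n}/U(n/k)$ contributions so that the $\widehat{\gamma}_1$-perturbations consolidate into $\zeta$, and finish with the covariance computation $\mathrm{Cov}(\Gamma_1,\Gamma_2)=\gamma\delta^\ast$ (the paper packages the same linearization as an additive seven-term decomposition $S_{n1},\dots,S_{n7}$ rather than your multiplicative factorization, and likewise leaves the final variance calculation as ``tedious computations''). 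The only blemish is a sign slip in your intermediate expansion of $\Pi/d_n$, which should read $\gamma_1/(1-\gamma_1)-k^{-1/2}\lambda^\ast/((1-\gamma_1)(1-\gamma_1-\tau_1))+o(k^{-1/2})$ so as to produce the stated bias after subtraction; your final displayed limit is nevertheless the correct one.
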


\section{\textbf{Proofs\label{sec5}}}

\subsection{Proof of Theorem $\ref{Theorem1}$}

We begin by a brief introduction on the weak approximation of a weighed tail
copula process given in Proposition 1 of \cite{EHL-2006}. Set $U_{i}%
:=\overline{F}\left(  X_{i}\right)  $ and $V_{i}:=\overline{G}\left(
Y_{i}\right)  ,$ for $i=1,...,n,$ and let $C\left(  x,y\right)  $ be the joint
df of $\left(  U_{i},V_{i}\right)  .$ The copula function $C$ and its
corresponding tail $R,$ defined in $(\ref{R}),$ are linked by $C\left(
tx,ty\right)  -R\left(  x,y\right)  =O\left(  t^{\epsilon}\right)  ,$ as
$t\downarrow0,$ for some $\epsilon>0,$ uniformly for $x,y\geq0$ and
$\max\left(  x,y\right)  \leq1$ \citep{Huang-1992}$.$ Let us define%
\[
\upsilon_{n}\left(  x,y\right)  :=\sqrt{k}\left(  \mathbf{T}_{n}\left(
x,y\right)  -R_{n}\left(  x,y\right)  \right)  ,\text{ }x,y>0,
\]
where%
\[
\mathbf{T}_{n}\left(  x,y\right)  :=\frac{1}{k}\sum_{i=1}^{n}\mathbf{1}\left(
U_{i}<\frac{k}{n}x,\text{ }V_{i}<\frac{k}{n}y\right)  \text{ and }R_{n}\left(
x,y\right)  :=\frac{n}{k}C\left(  \frac{kx}{n},\frac{ky}{n}\right)  .
\]
In the sequel, we will need the following two empirical processes:%
\[
\alpha_{n}\left(  x\right)  :=\upsilon_{n}\left(  x,\infty\right)  =\sqrt
{k}\left(  \mathbf{U}_{n}\left(  x\right)  -x\right)  \text{ and }\beta
_{n}\left(  y\right)  :=\upsilon_{n}\left(  \infty,y\right)  =\sqrt{k}\left(
\mathbf{V}_{n}\left(  y\right)  -y\right)  ,
\]
where%
\[
\mathbf{U}_{n}\left(  x\right)  :=\mathbf{T}_{n}\left(  x,\infty\right)
=\frac{1}{k}\sum_{i=1}^{n}\mathbf{1}\left(  U_{i}<\frac{k}{n}x\right)  ,
\]
and%
\[
\mathbf{V}_{n}\left(  y\right)  :=\mathbf{T}_{n}\left(  \infty,y\right)
=\frac{1}{k}\sum_{i=1}^{n}\mathbf{1}\left(  V_{i}<\frac{k}{n}y\right)  .
\]
From assertions $\left(  3.8\right)  $ and $\left(  3.9\right)  $ in
\cite{EHL-2006}, there exists a Gaussian process $W_{R}\left(  x,y\right)  ,$
defined on the probability space $\left(  \Omega,\mathcal{A},\mathbf{P}%
\right)  ,$ with mean zero and covariance%
\begin{equation}
\mathbf{E}\left[  W_{R}\left(  x_{1},y_{1}\right)  W_{R}\left(  x_{2}%
,y_{2}\right)  \right]  =R(\min\left(  x_{1},x_{2}\right)  ,\min\left(
y_{1},y_{2}\right)  ), \label{cov}%
\end{equation}
such that for any $M>0$%
\[
\sup\limits_{0<x,y\leq M}\dfrac{\left\vert \upsilon_{n}\left(  x,y\right)
-W_{R}\left(  x,y\right)  \right\vert }{\left\{  \max\left(  x,y\right)
\right\}  ^{\eta}}=o_{\mathbf{p}}\left(  1\right)  ,
\]
and%
\begin{equation}
\sup\limits_{0<x\leq M}\dfrac{\left\vert \alpha_{n}\left(  x\right)
-W_{1}\left(  x\right)  \right\vert }{x^{\eta}}=o_{\mathbf{p}}\left(
1\right)  =\sup\limits_{0<y\leq M}\dfrac{\left\vert \beta_{n}\left(  y\right)
-W_{2}\left(  y\right)  \right\vert }{y^{\eta}}, \label{approx}%
\end{equation}
as $n\rightarrow\infty,$ for any $0\leq\eta<1/2,$ where
\[
W_{1}\left(  x\right)  :=W_{R}\left(  x,\infty\right)  \text{ and }%
W_{2}\left(  y\right)  :=W_{R}\left(  \infty,y\right)  ,
\]
are two standard Wiener processes such that $\mathbf{E}\left[  W_{1}\left(
x\right)  W_{2}\left(  y\right)  \right]  =R\left(  x,y\right)  .\medskip$

\noindent To prove our result, we will write the tail index estimator
$\widehat{\gamma}_{1}$ in terms of the processes $\alpha_{n}\left(
\cdot\right)  $ and $\beta_{n}\left(  \cdot\right)  .$ We start by splitting
$\widehat{\gamma}_{1}-\gamma_{1}$ into the sum of two terms%
\[
T_{n1}:=\frac{\widehat{\gamma}_{2}\left(  \gamma_{2}-\gamma\right)
+\gamma_{2}\gamma}{\left(  \widehat{\gamma}_{2}-\widehat{\gamma}\right)
\left(  \gamma_{2}-\gamma\right)  }\left(  \widehat{\gamma}-\gamma\right)
\text{ and }T_{n2}:=-\frac{\gamma^{2}}{\left(  \widehat{\gamma}_{2}%
-\widehat{\gamma}\right)  \left(  \gamma_{2}-\gamma\right)  }\left(
\widehat{\gamma}_{2}-\gamma_{2}\right)  .
\]
Note that, for two sequences of rv's $V_{n}^{\left(  1\right)  }$ and
$V_{n}^{\left(  2\right)  },$ we use the notation$\ V_{n}^{\left(  1\right)
}\approx V_{n}^{\left(  2\right)  }$ to say that$V_{n}^{\left(  1\right)
}=V_{n}^{\left(  2\right)  }\left(  1+o_{\mathbf{p}}\left(  1\right)  \right)
,$\ as $n\rightarrow\infty.$ Since both $\widehat{\gamma}$ and $\widehat
{\gamma}_{2}$ are consistent estimators \citep[]{Mas82}, then, as
$n\rightarrow\infty,$ we have%
\[
T_{n1}\approx\frac{c}{\gamma}\left(  \widehat{\gamma}-\gamma\right)  \text{
and }T_{n2}\approx-\frac{c_{2}}{\gamma_{2}}\left(  \widehat{\gamma}_{2}%
-\gamma_{2}\right)  ,
\]
where $c_{1}$ and $c_{2}$ are those defined in Theorem $\ref{Theorem1}.$ In
other words, we have, as $n\rightarrow\infty,$%
\begin{equation}
\sqrt{k}\left(  \widehat{\gamma}_{1}-\gamma_{1}\right)  \approx\frac{c}%
{\gamma}\sqrt{k}\left(  \widehat{\gamma}-\gamma\right)  -\frac{c_{2}}%
{\gamma_{2}}\sqrt{k}\left(  \widehat{\gamma}_{2}-\gamma_{2}\right)
\label{gam-1}%
\end{equation}
Next, we represent $\sqrt{k}\left(  \widehat{\gamma}-\gamma\right)  $ and
$\sqrt{k}\left(  \widehat{\gamma}_{2}-\gamma_{2}\right)  $ in terms of
$\alpha_{n}\left(  \cdot\right)  $ and $\beta_{n}\left(  \cdot\right)  $
respectively. For the first term, we use the first-order condition of regular
variation of $\overline{F}$ $(\ref{RV-2})$ and apply Theorem 1.2.2 in
\cite{deHF06} to have%
\[
\lim_{n\rightarrow\infty}\frac{n}{k}\int_{F^{\leftarrow}\left(  1-k/n\right)
}^{\infty}t^{-1}\overline{F}\left(  t\right)  dt=\gamma,
\]
this allows us to write $\widehat{\gamma}=\dfrac{n}{k}\int_{X_{n-k:n}}%
^{\infty}t^{-1}\overline{F}_{n}\left(  t\right)  dt.$ Now, we consider the
following decomposition $\widehat{\gamma}-\gamma=S_{n1}+S_{n2}+S_{n3},$ where%
\[
S_{n1}:=\frac{n}{k}\int_{X_{n-k:n}}^{\infty}t^{-1}\left(  \overline{F}%
_{n}\left(  t\right)  -\overline{F}\left(  t\right)  \right)  dt,\text{
}S_{n2}:=-\frac{n}{k}\int_{F^{-1}\left(  1-k/n\right)  }^{X_{n-k:n}}%
t^{-1}\overline{F}\left(  t\right)  dt
\]
and%
\[
S_{n3}:=\frac{n}{k}\int_{F^{-1}\left(  1-k/n\right)  }^{\infty}t^{-1}%
\overline{F}\left(  t\right)  dt-\gamma.
\]
It is easy to verify that, almost surely, we have%
\begin{equation}
\overline{F}_{n}\left(  t\right)  =\dfrac{k}{n}\mathbf{U}_{n}\left(  \dfrac
{n}{k}\overline{F}\left(  t\right)  \right)  . \label{rep}%
\end{equation}
Without loss of generality and after two successive changes of variables
$(u=tX_{n-k:n}$ then $s=n\overline{F}\left(  tX_{n-k:n}\right)  /k),$ we have%
\[
S_{n1}=\int_{\frac{n}{k}\overline{F}\left(  X_{n-k:n}\right)  }^{0}%
\frac{\mathbf{U}_{n}\left(  s\right)  -s}{F^{\leftarrow}\left(  1-sk/n\right)
}dF^{\leftarrow}\left(  1-sk/n\right)  ,
\]
which we decompose into%
\begin{align*}
S_{n1}  &  =\int_{1}^{0}\frac{\mathbf{U}_{n}\left(  s\right)  -s}%
{F^{\leftarrow}\left(  1-sk/n\right)  }dF^{\leftarrow}\left(  1-sk/n\right) \\
&  \ \ \ \ \ \ \ +\int_{\frac{n}{k}\overline{F}\left(  X_{n-k:n}\right)  }%
^{1}\frac{\mathbf{U}_{n}\left(  s\right)  -s}{F^{\leftarrow}\left(
1-sk/n\right)  }dF^{\leftarrow}\left(  1-sk/n\right)  .
\end{align*}
For the purpose of using Potter's result of Lemma \ref{Lem3} for the quantile
function $s\rightarrow F^{\leftarrow}\left(  1-s\right)  ,$ we write%
\begin{align*}
S_{n1}  &  =\int_{1}^{0}\frac{F^{\leftarrow}\left(  1-k/n\right)
}{F^{\leftarrow}\left(  1-sk/n\right)  }\left(  \mathbf{U}_{n}\left(
s\right)  -s\right)  d\frac{F^{\leftarrow}\left(  1-sk/n\right)
}{F^{\leftarrow}\left(  1-k/n\right)  }\\
&  +\int_{\frac{n}{k}\overline{F}\left(  X_{n-k:n}\right)  }^{1}%
\frac{F^{\leftarrow}\left(  1-k/n\right)  }{F^{\leftarrow}\left(
1-sk/n\right)  }\left(  \mathbf{U}_{n}\left(  s\right)  -s\right)
d\frac{F^{\leftarrow}\left(  1-sk/n\right)  }{F^{\leftarrow}\left(
1-k/n\right)  }.
\end{align*}
This allows us to write%
\[
S_{n1}\approx\gamma\left\{  \int_{0}^{1}s^{-1}\left(  \mathbf{U}_{n}\left(
s\right)  -s\right)  ds-\int_{\frac{n}{k}\overline{F}\left(  X_{n-k:n}\right)
}^{1}s^{-1}\left(  \mathbf{U}_{n}\left(  s\right)  -s\right)  ds\right\}  .
\]
In other words, we have, as $n\rightarrow\infty,$%
\begin{equation}
\sqrt{k}S_{n1}\approx\gamma\left\{  \int_{0}^{1}s^{-1}\alpha_{n}\left(
s\right)  ds-\int_{\frac{n}{k}\overline{F}\left(  X_{n-k:n}\right)  }%
^{1}s^{-1}\alpha_{n}\left(  s\right)  ds\right\}  . \label{sn-1}%
\end{equation}
As for the second term $S_{n2},$ we use the mean value theorem to get%
\[
S_{n2}=-\frac{n}{k}\left(  X_{n-k:n}-U\left(  n/k\right)  \right)  z_{n}%
^{-1}\overline{F}\left(  z_{n}\right)  ,
\]
where $z_{n}$ is a sequence of rv's lying between $X_{n-k:n}$ and $U\left(
n/k\right)  .$\ Observe that we have%
\[
S_{n2}=-\frac{\overline{F}\left(  z_{n}\right)  }{\overline{F}\left(
F^{\leftarrow}\left(  1-k/n\right)  \right)  }\frac{U\left(  n/k\right)
}{z_{n}}\left(  \frac{X_{n-k:n}}{U\left(  n/k\right)  }-1\right)  .
\]
Since $X_{n-k:n}/U\left(  n/k\right)  \overset{\mathbf{p}}{\rightarrow}1,$
then $z_{n}/U\left(  n/k\right)  \overset{\mathbf{p}}{\rightarrow}1$ and
$\dfrac{n}{k}\overline{F}\left(  z_{n}\right)  \overset{\mathbf{p}%
}{\rightarrow}1.$ It follows that%
\[
S_{n2}\approx-\left(  \frac{X_{n-k:n}}{U\left(  n/k\right)  }-1\right)  .
\]
Recall that $U_{i}=\overline{F}\left(  X_{i}\right)  $ and note that
$U_{i:n}=\overline{F}\left(  X_{n-i+1:n}\right)  ,$ therefore%
\[
S_{n2}\approx-\left(  \frac{F^{\leftarrow}\left(  1-U_{k+1:n}\right)
}{U\left(  n/k\right)  }-1\right)  .
\]
We use Potter's bound inequalities (see Lemma \ref{Lem3}) together with the
mean value theorem to write $S_{n2}\approx\gamma\left(  \dfrac{n}{k}%
U_{k+1:n}-1\right)  .$ Since $\mathbf{U}_{n}\left(  \dfrac{n}{k}%
U_{k+1:n}\right)  =1,$ then%
\begin{equation}
\sqrt{k}S_{n2}\approx-\gamma\alpha_{n}\left(  \frac{n}{k}U_{k+1:n}\right)  .
\label{sn-2}%
\end{equation}
By summing up $\left(  \ref{sn-1}\right)  $ and $\left(  \ref{sn-2}\right)  $
and making use of the weak approximation $(\ref{approx})$ for $\alpha
_{n}\left(  \cdot\right)  ,$ we get%
\begin{align*}
&  \sqrt{k}\left(  S_{n1}+S_{n2}\right) \\
&  \approx\gamma\left\{  \int_{0}^{1}s^{-1}W_{1}\left(  s\right)
ds-\int_{\frac{n}{k}\overline{F}\left(  X_{n-k:n}\right)  }^{1}s^{-1}%
W_{1}\left(  s\right)  ds-W_{1}\left(  \frac{n}{k}U_{k+1:n}\right)  \right\}
.
\end{align*}
Next, we show that $I_{n}:=\int_{\frac{n}{k}\overline{F}\left(  X_{n-k:n}%
\right)  }^{1}s^{-1}W_{1}\left(  s\right)  ds\overset{\mathbf{p}}{\rightarrow
}0.$ For arbitrary $\epsilon,\vartheta>0,$ we write
\[
\mathbf{P}\left(  \left\vert I_{n}\right\vert >\vartheta\right)
\leq\mathbf{P}\left(  \int_{1-\epsilon}^{1}s^{-1}W_{1}\left(  s\right)
ds>\vartheta\right)  +\mathbf{P}\left(  \left\vert \frac{n}{k}\overline
{F}\left(  X_{n-k:n}\right)  -1\right\vert >\epsilon\right)  .
\]
The fact that $\mathbf{E}\left\vert W_{1}\left(  s\right)  \right\vert \leq
s^{1/2}$ implies that $\mathbf{E}\left\vert \int_{1-\epsilon}^{1}s^{-1}%
W_{1}\left(  s\right)  ds\right\vert \leq\sqrt{2}\left(  1-\left(
1-\epsilon\right)  ^{1/2}\right)  .$ Therefore, by Chebyshev's inequality, we
infer that%
\[
\mathbf{P}\left(  \int_{1-\epsilon}^{1}s^{-1}W_{1}\left(  s\right)
ds>\vartheta\right)  \leq\sqrt{2}\vartheta^{-2}\left(  1-\left(
1-\epsilon\right)  ^{1/2}\right)  .
\]
On the other hand, we have $\dfrac{n}{k}\overline{F}\left(  X_{n-k:n}\right)
\overset{\mathbf{p}}{\rightarrow}1$ this means that for all large $n,$%
\[
\mathbf{P}\left(  \left\vert \frac{n}{k}\overline{F}\left(  X_{n-k:n}\right)
-1\right\vert >\epsilon\right)  \leq\sqrt{2}\epsilon^{-2}\left(  1-\left(
1-\epsilon\right)  ^{1/2}\right)  .
\]
It follows that $\mathbf{P}\left(  \left\vert I_{n}\right\vert >\vartheta
\right)  \leq\sqrt{2}\left(  \vartheta^{-2}+\epsilon^{-2}\right)  \left(
1-\left(  1-\epsilon\right)  ^{1/2}\right)  $ which tends to zero when
$\epsilon,\vartheta\downarrow0,$ as sought. Since $nU_{k+1:n}/k\overset
{\mathbf{p}}{\rightarrow}1,$ then by using similar arguments as the above we
have $W_{1}\left(  nU_{k+1:n}/k\right)  =W_{1}\left(  1\right)  +o_{\mathbf{p}%
}\left(  1\right)  .$ Consequently, we have%
\[
\sqrt{k}\left(  S_{n1}+S_{n2}\right)  =\left(  1+o_{\mathbf{p}}\left(
1\right)  \right)  \gamma\int_{0}^{1}s^{-1}W_{1}\left(  s\right)  ds-\gamma
W_{1}\left(  1\right)  .
\]
For the third term, it suffices to use the second-order condition of regular
variation to obtain
\[
\sqrt{k}S_{n3}=\frac{\sqrt{k}A\left(  n/k\right)  }{1-\tau}\left(  1+o\left(
1\right)  \right)  \text{ as }n\rightarrow\infty.
\]
In summary, we have%
\begin{equation}
\sqrt{k}\left(  \widehat{\gamma}-\gamma\right)  =\gamma\int_{0}^{1}s^{-1}%
W_{1}\left(  s\right)  ds-\gamma W_{1}\left(  1\right)  +\frac{\sqrt
{k}A\left(  n/k\right)  }{1-\tau}+o_{\mathbf{p}}\left(  1\right)  .
\label{gam}%
\end{equation}
Likewise, we write $\widehat{\gamma}_{2}=\dfrac{n}{k}\int_{Y_{n-k:n}}^{\infty
}t^{-1}\overline{G}_{n}\left(  t\right)  dt,$ where $G_{n}\left(  x\right)
:=n^{-1}\sum_{i=1}^{n}\mathbf{1}\left(  Y_{i}\leq x\right)  $ is the usual
empirical df based on the fully observed sample $\left(  Y_{1},...,Y_{n}%
\right)  .$ Then, by using similar arguments, we express $\widehat{\gamma}%
_{2}$ in terms of the process $\beta_{n}\left(  \cdot\right)  $ as follows:%
\begin{align*}
&  \sqrt{k}\left(  \widehat{\gamma}_{2}-\gamma_{2}\right) \\
&  \approx\gamma_{2}\left\{  \int_{0}^{1}s^{-1}\beta_{n}\left(  s\right)
ds-\int_{\frac{n}{k}\overline{G}\left(  Y_{n-k:n}\right)  }^{1}s^{-1}\beta
_{n}\left(  s\right)  ds-\beta_{n}\left(  \frac{n}{k}V_{k+1:n}\right)
\right\}  .
\end{align*}
Then by using approximation $(\ref{approx})$ for $\beta_{n}\left(
\cdot\right)  ,$ we obtain%
\begin{equation}
\sqrt{k}\left(  \widehat{\gamma}_{2}-\gamma_{2}\right)  =\gamma_{2}\int
_{0}^{1}s^{-1}W_{2}\left(  s\right)  ds-\gamma_{2}W_{2}\left(  1\right)
+\frac{\sqrt{k}A_{2}\left(  n/k\right)  }{1-\tau_{2}}+o_{\mathbf{p}}\left(
1\right)  . \label{gam-2}%
\end{equation}
Finally, substituting results $\left(  \ref{gam}\right)  $ and $\left(
\ref{gam-2}\right)  $ in equation $\left(  \ref{gam-1}\right)  $ achieves the
proof.$\hfill\square$

\subsection{Proof of Corollary $\ref{cor1}$}

Elementary calculations, using the covariance formula $(\ref{cov})$ and the
fact that $\mathbf{E}\left[  \int_{0}^{1}s^{-1}W_{i}\left(  s\right)
ds\right]  ^{2}=2,$ $i=1,2,$ straightforwardly lead to the result.$\hfill
\square$

\subsection{Proof of Corollary $\ref{cor2}$}

It suffices to plug the estimate of each parameter in the result of Corollary
$\ref{cor1}.$ To estimate the limits $\lambda$ and $\lambda_{2},$ we exploit
the second-order conditions of regular variation $\left(  \ref{second-order}%
\right)  .$ We have, as $z\rightarrow\infty,$%
\[
A\left(  z\right)  \sim\tau\frac{U\left(  zx\right)  /U\left(  z\right)
-x^{\gamma}}{x^{\gamma}\left(  x^{\tau}-1\right)  },\text{ for any }x>0.
\]
In particular, for $x=1/2,$ and $z=n/k,$ we have%
\[
A\left(  n/k\right)  \sim\tau\frac{U\left(  \dfrac{n}{2k}\right)  /U\left(
\dfrac{n}{k}\right)  -2^{-\gamma}}{2^{-\gamma}\left(  2^{-\tau}-1\right)  }.
\]
Hence, we take%
\[
\widehat{A}\left(  n/k\right)  =\widehat{\tau}\frac{X_{n-2k:n}/X_{n-k:n}%
-2^{-\widehat{\gamma}}}{2^{-\widehat{\gamma}}\left(  2^{-\widehat{\tau}%
}-1\right)  }=\widehat{\tau}\frac{X_{n-2k:n}-2^{-\widehat{\gamma}}X_{n-k:n}%
}{2^{-\widehat{\gamma}}\left(  2^{-\widehat{\tau}}-1\right)  X_{n-k:n}},
\]
an estimate of $A\left(  n/k\right)  .$ Thus, the expression of $\widehat
{\lambda}$ readily follows. The same idea applies to $\lambda_{2}$ as
well.$\hfill\square$

\subsection{Proof of Theorem $\ref{Theorem2}$}

For convenience we set%
\[
\mathbf{D}_{n}^{\ast}:=\frac{\overline{\mathbf{F}}_{n}\left(  X_{n-k:n}%
\right)  -\overline{\mathbf{F}}\left(  X_{n-k:n}\right)  }{\overline
{\mathbf{F}}\left(  X_{n-k:n}\right)  }.
\]
Since $\overline{\mathbf{F}}$ is regularly varying at infinity with index
$-1/\gamma_{1}$ and $X_{n-k:n}/U\left(  n/k\right)  \overset{\mathbf{P}%
}{\rightarrow}1,$ then $\overline{\mathbf{F}}\left(  X_{n-k:n}\right)
\approx\overline{\mathbf{F}}\left(  U\left(  n/k\right)  \right)  $ and
therefore%
\[
\mathbf{D}_{n}^{\ast}\approx\frac{\overline{\mathbf{F}}_{n}\left(
X_{n-k:n}\right)  -\overline{\mathbf{F}}\left(  X_{n-k:n}\right)  }%
{\overline{\mathbf{F}}\left(  U\left(  n/k\right)  \right)  }.
\]
Using equations $\left(  \ref{F}\right)  $ and $\left(  \ref{Fn}\right)  ,$ we
have%
\[
\mathbf{D}_{n}^{\ast}\approx\frac{\left\{  1-\exp-\Lambda_{n}\left(
X_{n-k:n}\right)  \right\}  -\left\{  1-\exp-\Lambda\left(  X_{n-k:n}\right)
\right\}  }{1-\exp-\Lambda\left(  U\left(  n/k\right)  \right)  }.
\]
Since both $\overline{\mathbf{F}}_{n}\left(  X_{n-k:n}\right)  $ and
$\overline{\mathbf{F}}\left(  X_{n-k:n}\right)  $ tend to zero in probability,
then $\Lambda_{n}\left(  X_{n-k:n}\right)  $ and $\Lambda\left(
X_{n-k:n}\right)  $ go to zero in probability as well.\ Hence, by using the
approximation $1-\exp(-x)\sim x,$ as $x\rightarrow0,$ we get%
\[
\mathbf{D}_{n}^{\ast}\approx\frac{\Lambda_{n}\left(  X_{n-k:n}\right)
-\Lambda\left(  X_{n-k:n}\right)  }{\Lambda\left(  U\left(  n/k\right)
\right)  }=:\mathbf{D}_{n}.
\]
Now, we study the asymptotic behavior of $\mathbf{D}_{n}.$ The numerator%
\[
\Lambda_{n}\left(  X_{n-k:n}\right)  -\Lambda\left(  X_{n-k:n}\right)
=-\int_{X_{n-k:n}}^{\infty}\frac{d\overline{F}_{n}\left(  z\right)  }%
{C_{n}\left(  z\right)  }+\int_{X_{n-k:n}}^{\infty}\frac{d\overline{F}\left(
z\right)  }{C\left(  z\right)  },
\]
may be decomposed into $S_{n1}+$ $S_{n2}+S_{n3},$ with%
\[
S_{n1}:=-\int_{U\left(  n/k\right)  }^{\infty}\frac{d\left(  \overline{F}%
_{n}\left(  z\right)  -\overline{F}\left(  z\right)  \right)  }{C\left(
z\right)  },\text{ }S_{n2}:=-\int_{X_{n-k:n}}^{\infty}\left\{  \frac{1}%
{C_{n}\left(  z\right)  }-\frac{1}{C\left(  z\right)  }\right\}  d\overline
{F}_{n}\left(  z\right)  ,
\]
and%
\[
S_{n3}:=-\int_{X_{n-k:n}}^{U\left(  n/k\right)  }\frac{d\left(  \overline
{F}_{n}\left(  z\right)  -\overline{F}\left(  z\right)  \right)  }{C\left(
z\right)  }.
\]
We will show that $\sqrt{k}S_{n1}/\Lambda\left(  U\left(  n/k\right)  \right)
$ is an asymptotically centred Gaussian rv while both $\sqrt{k}S_{n2}%
/\Lambda\left(  U\left(  n/k\right)  \right)  $ and $\sqrt{k}S_{n3}%
/\Lambda\left(  U\left(  n/k\right)  \right)  $ tend to zero (in probability)
as $n\rightarrow\infty.$ An integration by parts yields that $S_{n1}%
=S_{n1}^{\left(  1\right)  }-S_{n1}^{\left(  2\right)  },$ where%
\[
S_{n1}^{\left(  1\right)  }:=\frac{\overline{F}_{n}\left(  U\left(
n/k\right)  \right)  -k/n}{C\left(  U\left(  n/k\right)  \right)  }%
\]
and (with a change of variables)%
\[
S_{n1}^{\left(  2\right)  }:=\int_{1}^{\infty}\frac{\overline{F}_{n}\left(
zU\left(  n/k\right)  \right)  -\overline{F}\left(  zU\left(  n/k\right)
\right)  }{C^{2}\left(  zU\left(  n/k\right)  \right)  }dC\left(  zU\left(
n/k\right)  \right)  .
\]
It is easy to verify that%
\[
\frac{\sqrt{k}S_{n1}^{\left(  1\right)  }}{\Lambda\left(  U\left(  n/k\right)
\right)  }=\frac{k/n}{\Lambda\left(  U\left(  n/k\right)  \right)  C\left(
U\left(  n/k\right)  \right)  }\alpha_{n}\left(  1\right)  ,
\]
where $\alpha_{n}\left(  \cdot\right)  $ is the uniform tail empirical process
defined at the beginning of the proof of Theorem \ref{Theorem1}. From Lemma
$\left(  \ref{lem2}\right)  ,$\ we infer that%
\begin{equation}
\sqrt{k}S_{n1}^{\left(  1\right)  }/\Lambda\left(  U\left(  n/k\right)
\right)  \approx\gamma\gamma_{1}^{-1}\alpha_{n}\left(  1\right)  .
\label{Sn11}%
\end{equation}
For the term $S_{n1}^{\left(  2\right)  },$ we have%
\[
\frac{\sqrt{k}S_{n1}^{\left(  2\right)  }}{\Lambda\left(  U\left(  n/k\right)
\right)  }=\frac{\int_{1}^{\infty}\dfrac{C^{2}\left(  U\left(  n/k\right)
\right)  }{C^{2}\left(  zU\left(  n/k\right)  \right)  }\alpha_{n}\left(
\dfrac{n}{k}\overline{F}\left(  zU\left(  n/k\right)  \right)  \right)
d\dfrac{C\left(  zU\left(  n/k\right)  \right)  }{C\left(  U\left(
n/k\right)  \right)  }}{\left(  n/k\right)  \Lambda\left(  U\left(
n/k\right)  \right)  C\left(  U\left(  n/k\right)  \right)  }.
\]
From Lemma $\left(  \ref{lem1}\right)  $, we know that the function $C$ is
regularly varying at infinity with index $-1/\gamma_{2},$ then by using
Potter's inequality, together with $(\ref{limit}),$ we get%
\[
\frac{\sqrt{k}S_{n1}^{\left(  2\right)  }}{\Lambda\left(  U\left(  n/k\right)
\right)  }\approx-\left(  \gamma_{1}+\gamma_{2}\right)  ^{-1}\int_{1}^{\infty
}z^{1/\gamma_{2}-1}\alpha_{n}\left(  \frac{n}{k}\overline{F}\left(  zU\left(
n/k\right)  \right)  \right)  dz,
\]
which, by the change of variables $s=\dfrac{n}{k}\overline{F}\left(  zU\left(
n/k\right)  \right)  =\dfrac{n}{k}\overline{F}\left(  zF^{\leftarrow}\left(
1-k/n\right)  \right)  ,$ becomes%
\[
\frac{\sqrt{k}S_{n1}^{\left(  2\right)  }}{\Lambda\left(  U\left(  n/k\right)
\right)  }\approx\left(  \gamma_{1}+\gamma_{2}\right)  ^{-1}\int_{0}%
^{1}\left(  \psi_{n}\left(  s\right)  \right)  ^{1/\gamma_{2}-1}\alpha
_{n}\left(  s\right)  d\psi_{n}\left(  s\right)  ,
\]
where $\psi_{n}\left(  s\right)  :=F^{\leftarrow}\left(  1-ks/n\right)
/F^{\leftarrow}\left(  1-k/n\right)  .$ Making use, once again, of Potter's
inequality of Lemma \ref{Lem3} to the quantile function $s\rightarrow
F^{\leftarrow}\left(  1-s\right)  ,$ yields%
\begin{equation}
\frac{\sqrt{k}S_{n1}^{\left(  2\right)  }}{\Lambda\left(  U\left(  n/k\right)
\right)  }\approx-\frac{\gamma_{1}\gamma_{2}}{\left(  \gamma_{1}+\gamma
_{2}\right)  ^{2}}\int_{0}^{1}s^{-\gamma/\gamma_{2}-1}\alpha_{n}\left(
s\right)  ds. \label{Sn12}%
\end{equation}
Subtracting $\left(  \ref{Sn12}\right)  $ from $\left(  \ref{Sn11}\right)  $
and using the weak approximation\ $(\ref{approx}),$ we get%
\[
\frac{\sqrt{k}S_{n1}}{\Lambda\left(  U\left(  n/k\right)  \right)  }%
\approx\frac{\gamma_{1}\gamma_{2}}{\left(  \gamma_{1}+\gamma_{2}\right)  ^{2}%
}\int_{0}^{1}s^{-\gamma/\gamma_{2}-1}W_{1}\left(  s\right)  ds+\frac{\gamma
}{\gamma_{1}}W_{1}\left(  1\right)  +o_{p}\left(  1\right)  .
\]
Note that the centred rv $\int_{0}^{1}s^{-\gamma/\gamma_{2}-1}W_{1}\left(
s\right)  ds$ has a finite second moment (in fact it is equal to $2\gamma
_{2}^{2}/\left(  \left(  \gamma_{2}-\gamma\right)  \left(  \gamma_{2}%
-2\gamma\right)  \right)  .$\textbf{\ }As a result, the approximation above
becomes%
\[
\frac{\sqrt{k}S_{n1}}{\Lambda\left(  U\left(  n/k\right)  \right)  }%
=\frac{\gamma_{1}\gamma_{2}}{\left(  \gamma_{1}+\gamma_{2}\right)  ^{2}}%
\int_{0}^{1}s^{-\gamma/\gamma_{2}-1}W_{1}\left(  s\right)  ds+\frac{\gamma
}{\gamma_{1}}W_{1}\left(  1\right)  +o_{p}\left(  1\right)  .
\]
Now, we consider the second term $S_{n2}.\ $Since $\overline{F}_{n}\left(
z\right)  =0,$ for $z\geq X_{n:n},$ then%
\[
S_{n2}=\int_{X_{n-k:n}}^{X_{n:n}}\frac{C_{n}\left(  z\right)  -C\left(
z\right)  }{C_{n}\left(  z\right)  C\left(  z\right)  }d\overline{F}%
_{n}\left(  z\right)  .
\]
It follows that%
\[
\left\vert S_{n2}\right\vert \leq\theta_{n}\int_{X_{n-k:n}}^{\infty}%
\frac{\left\vert C_{n}\left(  z\right)  -C\left(  z\right)  \right\vert
}{C^{2}\left(  z\right)  }dF_{n}\left(  z\right)  ,
\]
where $\theta_{n}:=\sup_{X_{1:n}\leq z\leq X_{n:n}}\left\{  C\left(  z\right)
/C_{n}\left(  z\right)  \right\}  ,$ which is stochastically bounded
\citep[see, e.g.,][]{SW08}. We have $C=\overline{G}-\overline{F}$ and
$C_{n}=\overline{G}_{n}-\overline{F}_{n},$ then $\left\vert S_{n2}\right\vert
\leq\theta_{n}\left(  T_{n1}+T_{n2}\right)  ,$ where
\[
T_{n1}:=\int_{X_{n-k:n}}^{\infty}\dfrac{\left\vert \overline{F}_{n}\left(
z\right)  -\overline{F}\left(  z\right)  \right\vert }{C^{2}\left(  z\right)
}dF_{n}\left(  z\right)  \text{ and }T_{n2}:=\int_{X_{n-k:n}}^{\infty}%
\dfrac{\left\vert \overline{G}_{n}\left(  z\right)  -\overline{G}\left(
z\right)  \right\vert }{C^{2}\left(  z\right)  }dF_{n}\left(  z\right)  .
\]
\ The set $\mathcal{A}_{n,\epsilon}:=\left\{  \left\vert X_{n-k:n}/U\left(
n/k\right)  -1\right\vert >\epsilon\right\}  ,$ $0<\epsilon<1$ is such that
$\mathbf{P}\left(  \mathcal{A}_{n,\epsilon}\right)  \rightarrow0$ as
$n\rightarrow\infty.$ For convenience, let $u_{n,\epsilon}:=\left(
1-\epsilon\right)  U\left(  n/k\right)  $ and%
\[
T_{n1}\left(  \epsilon\right)  :=\int_{u_{n,\epsilon}}^{\infty}\dfrac
{\left\vert \overline{F}_{n}\left(  z\right)  -\overline{F}\left(  z\right)
\right\vert }{C^{2}\left(  z\right)  }dF_{n}\left(  z\right)  .
\]
It is obvious that, for $\vartheta>0,$
\[
\mathbf{P}\left(  \frac{\sqrt{k}T_{n1}}{\Lambda\left(  U\left(  n/k\right)
\right)  }>\vartheta\right)  \leq\mathbf{P}\left(  \frac{\sqrt{k}T_{n1}\left(
\epsilon\right)  }{\Lambda\left(  U\left(  n/k\right)  \right)  }%
>\vartheta\right)  +\mathbf{P}\left(  \mathcal{A}_{n}\right)  .
\]
Then it remains to show that $\mathbf{P}\left(  \dfrac{\sqrt{k}T_{n1}\left(
\epsilon\right)  }{\Lambda\left(  U\left(  n/k\right)  \right)  }%
>\vartheta\right)  \rightarrow0$ as $n\rightarrow\infty.\ $To this end, let us
write%
\begin{align*}
\frac{\sqrt{k}T_{n1}\left(  \epsilon\right)  }{\Lambda\left(  U\left(
n/k\right)  \right)  }  &  =\left\{  \frac{\overline{F}\left(  U\left(
n/k\right)  \right)  }{C\left(  U\left(  n/k\right)  \right)  }\right\}
\left\{  \frac{k/n}{\Lambda\left(  U\left(  n/k\right)  \right)  C\left(
U\left(  n/k\right)  \right)  }\right\} \\
&  \times\left\{  \frac{C\left(  U\left(  n/k\right)  \right)  }{C\left(
u_{n,\epsilon}\right)  }\right\}  ^{2}\int_{1}^{\infty}\dfrac{\left\vert
\alpha_{n}\left(  n\overline{F}\left(  zu_{n,\epsilon}\right)  /k\right)
\right\vert }{\left[  C\left(  zu_{n,\epsilon}\right)  /C\left(
u_{n,\epsilon}\right)  \right]  ^{2}}d\frac{F_{n}\left(  zu_{n,\epsilon
}\right)  }{\overline{F}\left(  U\left(  n/k\right)  \right)  }%
\end{align*}
The regular variation property of $C,$ that implies that $C\left(  U\left(
n/k\right)  \right)  /C\left(  u_{n,\epsilon}\right)  \rightarrow\left(
1-\epsilon\right)  ^{1/\gamma_{2}},$ as $n\rightarrow\infty,$ together with
\ $(\ref{approx}),$ $\left(  \ref{limit}\right)  $ and Potter's inequality
(see Lemma \ref{Lem3}), give%
\[
\frac{\sqrt{k}T_{n1}\left(  \epsilon\right)  }{\Lambda\left(  U\left(
n/k\right)  \right)  }=O_{\mathbf{p}}\left(  1\right)  \frac{\overline
{F}\left(  U\left(  n/k\right)  \right)  }{C\left(  U\left(  n/k\right)
\right)  }\frac{\gamma}{\gamma_{1}}\int_{1}^{\infty}z^{2/\gamma_{2}}%
d\frac{F_{n}\left(  zu_{n,\epsilon}\right)  }{\overline{F}\left(  U\left(
n/k\right)  \right)  }.
\]
The expectation of the integral in the previous equation equals%
\[
-\int_{1}^{\infty}z^{2/\gamma_{2}}d\left(  \overline{F}\left(  zu_{n,\epsilon
}\right)  /\overline{F}\left(  U\left(  n/k\right)  \right)  \right)  ,
\]
which, by routine manipulations and the fact that the parameters $\gamma_{1}$
and $\gamma_{2}$ are such that $\gamma_{1}<\gamma_{2},$ converges to $-\left(
1-\epsilon\right)  ^{-1/\gamma}\gamma_{2}/\left(  2\gamma-\gamma_{2}\right)  $
as $n\rightarrow\infty.$ On the other hand, we have $\overline{F}\left(
U\left(  n/k\right)  \right)  =k/n$ and $\left(  k/n\right)  /C\left(
U\left(  n/k\right)  \right)  \rightarrow0$ as $n\rightarrow\infty$ (from
Lemma \ref{lem1}). Therefore, $\sqrt{k}T_{n1}\left(  \epsilon\right)
/\Lambda\left(  U\left(  n/k\right)  \right)  \overset{\mathbf{P}}%
{\rightarrow}0$ as $n\rightarrow\infty$ and so does $\sqrt{k}T_{n1}%
/\Lambda\left(  U\left(  n/k\right)  \right)  .$ Similar arguments lead to the
same result for $\sqrt{k}T_{n2}/\Lambda\left(  U\left(  n/k\right)  \right)
,$ therefore we omit details. Finally, we focus on the third term $S_{n3},$
for which an integration by parts yields%
\begin{align*}
S_{n3}  &  =\int_{X_{n-k:n}}^{U\left(  n/k\right)  }\frac{\overline{F}%
_{n}\left(  z\right)  -\overline{F}\left(  z\right)  }{C^{2}\left(  z\right)
}dC\left(  z\right)  .\\
&  +\frac{\overline{F}_{n}\left(  X_{n-k:n}\right)  -\overline{F}\left(
X_{n-k:n}\right)  }{C\left(  X_{n-k:n}\right)  }-\frac{\overline{F}_{n}\left(
U\left(  n/k\right)  \right)  -\overline{F}\left(  U\left(  n/k\right)
\right)  }{C\left(  U\left(  n/k\right)  \right)  }.
\end{align*}
Changing variables and using the process $\alpha_{n}\left(  \cdot\right)  ,$
we get%
\begin{align*}
\frac{\sqrt{k}S_{n3}}{\Lambda\left(  U\left(  n/k\right)  \right)  }  &
=\frac{k/n}{C\left(  U\left(  n/k\right)  \right)  \Lambda\left(  U\left(
n/k\right)  \right)  }\\
&  \times\left\{  \int_{X_{n-k:n}/U\left(  n/k\right)  }^{1}\frac{\alpha
_{n}\left(  \dfrac{n}{k}\overline{F}\left(  zU\left(  n/k\right)  \right)
\right)  }{\left[  C\left(  zU\left(  n/k\right)  \right)  /C\left(  U\left(
n/k\right)  \right)  \right]  ^{2}}d\left(  \frac{C\left(  zU\left(
n/k\right)  \right)  }{C\left(  U\left(  n/k\right)  \right)  }\right)
\right. \\
&  \ \ \ \ \ \ \ \ \ \ \ \ \ \ \ \ \ \ \ \ \ \left.  +\frac{C\left(  U\left(
n/k\right)  \right)  }{C\left(  X_{n-k:n}\right)  }\alpha_{n}\left(  \dfrac
{n}{k}\overline{F}\left(  X_{n-k:n}\right)  \right)  -\alpha_{n}\left(
1\right)  \right\}  .
\end{align*}
For convenience, we set
\[
\xi_{n}^{+}:=\max(X_{n-k:n}/U\left(  n/k\right)  ,1)\text{ and }\xi_{n}%
^{-}:=\min(X_{n-k:n}/U\left(  n/k\right)  ,1).
\]
By using routine manipulations, including Potter's inequality (see Lemma
\ref{Lem3}) and the fact that $\sup_{0<t<1}\alpha_{n}\left(  t\right)  $ is
stochastically bounded, we show that%
\[
\frac{\sqrt{k}S_{n3}}{\Lambda\left(  U\left(  n/k\right)  \right)  }%
\approx\frac{\gamma}{\gamma_{1}}\left\{  O_{\mathbf{p}}\left(  1\right)
\left(  \xi_{n}^{+}-\xi_{n}^{-}\right)  +\frac{C\left(  U\left(  n/k\right)
\right)  }{C\left(  X_{n-k:n}\right)  }\alpha_{n}\left(  \dfrac{n}{k}%
\overline{F}\left(  X_{n-k:n}\right)  \right)  -\alpha_{n}\left(  1\right)
\right\}  .
\]
Since $\xi_{n}^{+}-\xi_{n}^{-}=\left\vert 1-X_{n-k:n}/U\left(  n/k\right)
\right\vert $ and $X_{n-k:n}/U\left(  n/k\right)  \overset{\mathbf{p}%
}{\rightarrow}1,$ then $\xi_{n}^{+}-\xi_{n}^{-}\overset{\mathbf{p}%
}{\rightarrow}0.$ Now, it is clear that%
\begin{align*}
\frac{\sqrt{k}S_{n3}}{\Lambda\left(  U\left(  n/k\right)  \right)  }  &
=\frac{\gamma}{\gamma_{1}}\left\{  \frac{C\left(  U\left(  n/k\right)
\right)  }{C\left(  X_{n-k:n}\right)  }\left(  \alpha_{n}\left(  \dfrac{n}%
{k}\overline{F}\left(  X_{n-k:n}\right)  \right)  -\alpha_{n}\left(  1\right)
\right)  \right. \\
&  \left.  +\left(  \frac{C\left(  U\left(  n/k\right)  \right)  }{C\left(
X_{n-k:n}\right)  }-1\right)  \alpha_{n}\left(  1\right)  \right\}
+o_{\mathbf{p}}\left(  1\right)  .
\end{align*}
We have $C\left(  U\left(  n/k\right)  \right)  /C\left(  X_{n-k:n}\right)
\overset{\mathbf{p}}{\rightarrow}1$ and $\alpha_{n}\left(  1\right)
=O_{\mathbf{p}}\left(  1\right)  ,$ then it suffices to show that $\alpha
_{n}\left(  \dfrac{n}{k}\overline{F}\left(  X_{n-k:n}\right)  \right)
-\alpha_{n}\left(  1\right)  \overset{\mathbf{p}}{\rightarrow}0.$ Indeed,
making use of the approximation $(\ref{approx}),$ we get
\[
\alpha_{n}\left(  \dfrac{n}{k}\overline{F}\left(  X_{n-k:n}\right)  \right)
-\alpha_{n}\left(  1\right)  =W_{1}\left(  \dfrac{n}{k}\overline{F}\left(
X_{n-k:n}\right)  \right)  -W_{1}\left(  1\right)  +o_{\mathbf{p}}\left(
1\right)  .
\]
Since $\left\{  W_{1}\left(  t\right)  ,\text{ }0\leq t\leq1\right\}  $ is a
Wiener process, then it is easy to verify that%
\[
\left\vert W_{1}\left(  \dfrac{n}{k}\overline{F}\left(  X_{n-k:n}\right)
\right)  -W_{1}\left(  1\right)  \right\vert \overset{d}{=}\left\vert
W_{1}\left(  \left\vert \dfrac{n}{k}\overline{F}\left(  X_{n-k:n}\right)
-1\right\vert \right)  \right\vert .
\]
Recall that $\dfrac{n}{k}\overline{F}\left(  X_{n-k:n}\right)  \overset
{\mathbf{p}}{\rightarrow}1,$ then by using similar arguments as those used in
the proof of Lemma 5.2 (i) in \cite{BMN-2014}, we show that $W_{1}\left(
\left\vert \dfrac{n}{k}\overline{F}\left(  X_{n-k:n}\right)  -1\right\vert
\right)  $ tends to zero in probability, which implies that $\sqrt{k}%
S_{n3}/\Lambda\left(  U\left(  n/k\right)  \right)  \overset{\mathbf{p}%
}{\rightarrow}0$ as well. In summary, we showed that%
\[
\sqrt{k}\mathbf{D}_{n}^{\ast}=\frac{\gamma_{1}\gamma_{2}}{\left(  \gamma
_{1}+\gamma_{2}\right)  ^{2}}\int_{0}^{1}s^{-\gamma/\gamma_{2}-1}W_{1}\left(
s\right)  ds+\frac{\gamma}{\gamma_{1}}W_{1}\left(  1\right)  +o_{p}\left(
1\right)  ,
\]
which leads to the wanted result. Finally, with some elementary calculations,
we get the variance of the Gaussian variable $\sqrt{k}\left(  \overline
{\mathbf{F}}_{n}\left(  X_{n-k:n}\right)  /\overline{\mathbf{F}}\left(
X_{n-k:n}\right)  -1\right)  $ and conclude the proof.$\hfill\square$

\subsection{Proof of Theorem $\ref{Theorem3}$}

For the sake of notational simplicity, we set $\ell=\ell_{n}:=U\left(
n/k\right)  .$ Let us rewrite $\Pi$ into%
\[
\Pi=u\overline{\mathbf{F}}\left(  u\right)  \int_{1}^{\infty}\frac
{\overline{\mathbf{F}}\left(  ux\right)  }{\overline{\mathbf{F}}\left(
u\right)  }dx,
\]
and consider the decomposition%
\[
\frac{\widehat{\Pi}_{n}-\Pi}{\left(  u/\ell\right)  ^{1-1/\gamma_{1}}%
\ell\overline{\mathbf{F}}\left(  \ell\right)  }=\sum_{i=1}^{7}S_{ni},
\]
where%
\[%
\begin{array}
[c]{cl}%
S_{n1} & :=\left\{  \dfrac{\left(  u/X_{n-k:n}\right)  ^{1-1/\widehat{\gamma
}_{1}}}{\left(  u/\ell\right)  ^{1-1/\gamma_{1}}}-1\right\}  \dfrac
{\widehat{\gamma}_{1}}{1-\widehat{\gamma}_{1}}\dfrac{X_{n-k:n}}{\ell}%
\dfrac{\overline{\mathbf{F}}_{n}\left(  X_{n-k:n}\right)  }{\overline
{\mathbf{F}}\left(  \ell\right)  },\medskip\\
S_{n2} & :=\dfrac{X_{n-k:n}}{\ell}\dfrac{\overline{\mathbf{F}}_{n}\left(
X_{n-k:n}\right)  }{\overline{\mathbf{F}}\left(  \ell\right)  }\left\{
\dfrac{\widehat{\gamma}_{1}}{1-\widehat{\gamma}_{1}}-\dfrac{\gamma_{1}%
}{1-\gamma_{1}}\right\}  ,\medskip\\
S_{n3} & :=\dfrac{\gamma_{1}}{1-\gamma_{1}}\dfrac{\overline{\mathbf{F}}\left(
X_{n-k:n}\right)  }{\overline{\mathbf{F}}\left(  \ell\right)  }\dfrac
{\overline{\mathbf{F}}_{n}\left(  X_{n-k:n}\right)  }{\overline{\mathbf{F}%
}\left(  X_{n-k:n}\right)  }\left\{  \dfrac{X_{n-k:n}}{\ell}-1\right\}
,\medskip\\
S_{n4} & :=\dfrac{\gamma_{1}}{1-\gamma_{1}}\dfrac{\overline{\mathbf{F}}%
_{n}\left(  X_{n-k:n}\right)  }{\overline{\mathbf{F}}\left(  X_{n-k:n}\right)
}\left\{  \dfrac{\overline{\mathbf{F}}\left(  X_{n-k:n}\right)  }%
{\overline{\mathbf{F}}\left(  \ell\right)  }-\left(  \dfrac{X_{n-k:n}}{\ell
}\right)  ^{-1/\gamma_{1}}\right\}  \medskip\\
S_{n5} & :=\dfrac{\gamma_{1}}{1-\gamma_{1}}\dfrac{\overline{\mathbf{F}}%
_{n}\left(  X_{n-k:n}\right)  }{\overline{\mathbf{F}}\left(  X_{n-k:n}\right)
}\left\{  \left(  \dfrac{X_{n-k:n}}{\ell}\right)  ^{-1/\gamma_{1}}-1\right\}
,\medskip
\end{array}
\]%
\[%
\begin{tabular}
[c]{ll}%
$S_{n6}$ & $:=\dfrac{\gamma_{1}}{1-\gamma_{1}}\left\{  \dfrac{\overline
{\mathbf{F}}_{n}\left(  X_{n-k:n}\right)  }{\overline{\mathbf{F}}\left(
X_{n-k:n}\right)  }-1\right\}  ,\medskip$\\
$S_{n7}$ & $:=\dfrac{\gamma_{1}}{1-\gamma_{1}}-\left(  \dfrac{u}{\ell}\right)
^{1/\gamma_{1}}\dfrac{\overline{\mathbf{F}}\left(  u\right)  }{\overline
{\mathbf{F}}\left(  \ell\right)  }%
{\displaystyle\int_{1}^{\infty}}
\dfrac{\overline{\mathbf{F}}\left(  ux\right)  }{\overline{\mathbf{F}}\left(
u\right)  }dx.$%
\end{tabular}
\ \ \ \ \ \
\]
We start by representing the five quantities $\sqrt{k}S_{ni},$ $i=1,2,3,5,6$
in terms of the Gaussian processes $W_{1}$ and $W_{2},$ given in Theorem
\ref{Theorem1}, then we show that $\sqrt{k}S_{n4}$ and $\sqrt{k}S_{n7}$
converge to deterministic limits. For the first term $S_{n1},$ recall that
$X_{n-k:n}\approx\ell,$ which implies by the regular variation of
$\overline{\mathbf{F}}$ that $\overline{\mathbf{F}}\left(  X_{n-k:n}\right)
\approx\overline{\mathbf{F}}\left(  \ell\right)  .$ On the other hand, we have
$\widehat{\gamma}_{1}\overset{\mathbf{P}}{\rightarrow}\gamma_{1}$ and, from
Remark \ref{Rmrk}, $\overline{\mathbf{F}}_{n}\left(  X_{n-k:n}\right)
\approx\overline{\mathbf{F}}\left(  X_{n-k:n}\right)  .$ It follows that%
\[
S_{n1}\approx\frac{\gamma_{1}}{1-\gamma_{1}}\left\{  \frac{\left(
u/X_{n-k:n}\right)  ^{1-1/\widehat{\gamma}_{1}}}{\left(  u/\ell\right)
^{1-1/\gamma_{1}}}-1\right\}  =\frac{\gamma_{1}}{1-\gamma_{1}}\left\{
S_{n1}^{\left(  1\right)  }+S_{n1}^{\left(  2\right)  }\right\}  ,
\]
where $S_{n1}^{\left(  1\right)  }:=\left(  u/\ell\right)  ^{1/\gamma
_{1}-1/\widehat{\gamma}_{1}}-1$ and $S_{n1}^{\left(  2\right)  }:=\left(
u/\ell\right)  ^{1/\gamma_{1}-1/\widehat{\gamma}_{1}}\left(  \left(
\ell/X_{n-k:n}\right)  ^{1-1/\widehat{\gamma}_{1}}-1\right)  .$ By using the
mean value theorem in $S_{n1}^{\left(  1\right)  },$ we have%
\[
S_{n1}^{\left(  1\right)  }=\left(  1/\gamma_{1}-1/\widehat{\gamma}%
_{1}\right)  \left(  u/\ell\right)  ^{\epsilon_{n}}\log\left(  u/\ell\right)
,
\]
with $\epsilon_{n}$ being between $1/\gamma_{1}-1/\widehat{\gamma}_{1}$ and
$0.$ The consistency of $\widehat{\gamma}_{1}$ implies that $\epsilon
_{n}\overset{\mathbf{P}}{\rightarrow}0,$ and therefore $S_{n1}^{\left(
1\right)  }\approx\gamma_{1}^{-2}\left(  \widehat{\gamma}_{1}-\gamma
_{1}\right)  \log\left(  u/\ell\right)  .$ Likewise, we may readily show that
\[
S_{n1}^{\left(  2\right)  }\approx\frac{1-\gamma_{1}}{\gamma_{1}}\left(
\frac{X_{n-k:n}}{\ell}-1\right)  .
\]
\ Consequently,%
\[
S_{n1}\approx\frac{\log\left(  u/\ell\right)  }{\gamma_{1}\left(  1-\gamma
_{1}\right)  }\left(  \widehat{\gamma}_{1}-\gamma_{1}\right)  +\left(
\frac{X_{n-k:n}}{\ell}-1\right)  .
\]
By using similar arguments we also show that
\[
S_{n2}\approx\frac{\widehat{\gamma}_{1}-\gamma_{1}}{\left(  1-\gamma
_{1}\right)  ^{2}},\text{ }S_{n3}\approx\frac{\gamma_{1}}{1-\gamma_{1}}\left(
\frac{X_{n-k:n}}{\ell}-1\right)  \text{ and }S_{n5}\approx-\frac{1}%
{1-\gamma_{1}}\left(  \frac{X_{n-k:n}}{\ell}-1\right)  .
\]
Summing these four terms, we obtain%
\[
S_{n1}+S_{n2}+S_{n3}+S_{n5}\approx\frac{\left(  1-\gamma_{1}\right)
\log\left(  u/\ell\right)  +\gamma_{1}}{\gamma_{1}\left(  1-\gamma_{1}\right)
^{2}}\left(  \widehat{\gamma}_{1}-\gamma_{1}\right)  .
\]
Now, we use the second approximation in Theorem \ref{Theorem1} to have%
\begin{equation}%
\begin{array}
[c]{l}%
\sqrt{k}\left(  S_{n1}+S_{n2}+S_{n3}+S_{n5}\right)  \approx\dfrac{\left(
1-\gamma_{1}\right)  \log a+\gamma_{1}}{\gamma_{1}\left(  1-\gamma_{1}\right)
^{2}}\\
\multicolumn{1}{c}{\times\left\{  \int_{0}^{1}t^{-1}\left(  cW_{1}\left(
t\right)  -c_{2}W_{2}\left(  t\right)  \right)  dt-cW_{1}\left(  1\right)
+c_{2}W_{2}\left(  1\right)  +\mu\left(  k\right)  +o_{\mathbf{p}}\left(
1\right)  \right\}  .}%
\end{array}
\label{S1235}%
\end{equation}
The asymptotic representation of Theorem \ref{Theorem2} yields%
\begin{equation}
\sqrt{k}S_{n6}\approx\frac{\gamma_{1}}{1-\gamma_{1}}\left\{  \frac{\gamma
_{1}\gamma_{2}}{\left(  \gamma_{1}+\gamma_{2}\right)  ^{2}}\int_{0}%
^{1}s^{-\gamma/\gamma_{2}-1}W_{1}\left(  s\right)  ds+\frac{\gamma}{\gamma
_{1}}W_{1}\left(  1\right)  \right\}  +o_{\mathbf{p}}\left(  1\right)  .
\label{S6}%
\end{equation}
For the fourth term $S_{n4},$ it suffices to use the second-order condition of
regular variation $\left(  \ref{secon-orderFbold}\right)  $ and the fact that
$X_{n-k:n}\approx\ell,$ to get%
\begin{equation}
\sqrt{k}S_{n4}=o_{\mathbf{P}}\left(  \sqrt{k}\mathbf{A}\left(  \ell\right)
\right)  =o_{\mathbb{P}}\left(  1\right)  ,\text{ as }n\rightarrow\infty.
\label{S4}%
\end{equation}
For the last term $S_{n7},$ we first note that%
\[
S_{n7}=\int_{1}^{\infty}x^{-1/\gamma_{1}}dx-\frac{u\overline{\mathbf{F}%
}\left(  u\right)  }{\left(  u/\ell\right)  ^{1-1/\gamma_{1}}\ell
\overline{\mathbf{F}}\left(  \ell\right)  }\int_{1}^{\infty}\frac
{\overline{\mathbf{F}}\left(  ux\right)  }{\overline{\mathbf{F}}\left(
u\right)  }dx,
\]
In addition to the the regular variation of $\left\vert \mathbf{A}\right\vert
,$ we apply the uniform inequality of regularly varying functions
\citep[see, e.g., Theorem 2.3.9 in][page 48]{deHF06} to show that%
\begin{equation}
\sqrt{k}S_{n7}\sim\frac{\sqrt{k}\mathbf{A}\left(  \ell\right)  }{\left(
\gamma_{1}-1-\tau_{1}\right)  \left(  \gamma_{1}-1\right)  }. \label{S7}%
\end{equation}
Finally, gathering results $\left(  \ref{S1235}\right)  ,$ $\left(
\ref{S6}\right)  ,$ $\left(  \ref{S4}\right)  $ and $\left(  \ref{S7}\right)
$ yields a Gaussian approximation from which we derive the normal limiting
distribution of the premium estimator $\widehat{\Pi}_{n}.$ Tedious
computations for the asymptotic variance complete the proof of the
theorem.\hfill$\Box$\textbf{\medskip}

\noindent\textbf{Concluding notes\medskip}

\noindent We proposed an estimator of the tail index for randomly truncated
heavy-tailed data based on the same number of extreme observations from both
truncated and truncation variables. Thus, the determination of the optimal
sample fraction becomes standard, in the sense of applying any convenient
algorithm available in the literature. The asymptotic normality of the
estimator is established by taking into account the dependence structure of
the observations and a practical way to construct confidence bounds for the
extreme value index is given.\ The obtained Gaussian approximations are of
great usefulness as they allow to determine the limiting distributions of
several statistics related to the extreme value index such that high quantiles
and risk measures estimators (see, for instance, \citeauthor{NM-2009},
\citeyear{NM-2009}). As an application, we provided an estimator for the
excess-of-loss reinsurance premium in the case of large randomly truncated claims.

\section{Appendix}

\begin{lemma}
\label{lem1}Assume that the second-order conditions $(\ref{second-order})$
hold with $\gamma_{1}<\gamma_{2}.$ Then the function $C$ is regularly varying
at infinity with index $-1/\gamma_{2}$ and $t^{-1}C\left(  U\left(  t\right)
\right)  \rightarrow0$ as $t\rightarrow\infty.$
\end{lemma}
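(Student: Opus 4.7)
The plan is to exploit the two equivalent representations $C = F - G = \overline{G} - \overline{F}$ noted in Section~\ref{sec4} together with the regular variation of the marginals. The key inputs are already in hand: by the first-order conditions inherited from $(\ref{second-order})$, $\overline{F}$ is regularly varying at infinity with index $-1/\gamma$ where $\gamma=\gamma_1\gamma_2/(\gamma_1+\gamma_2)$, and $\overline{G}$ is regularly varying with index $-1/\gamma_2$. Since $\gamma_1<\gamma_2$ forces $\gamma<\gamma_2$, we have $1/\gamma>1/\gamma_2$, so $\overline{F}$ decays strictly faster than $\overline{G}$.

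For the first assertion, I would factor $C(x)=\overline{G}(x)\bigl(1-\overline{F}(x)/\overline{G}(x)\bigr)$. A direct computation of exponents gives that $\overline{F}/\overline{G}$ is regularly varying with index $1/\gamma_2-1/\gamma=-1/\gamma_1<0$, so $\overline{F}(x)/\overline{G}(x)\to 0$ as $x\to\infty$. The factor in parentheses therefore tends to $1$, which (being a convergent positive factor) is slowly varying. Consequently $C$ inherits the regular variation index of $\overline{G}$, namely $-1/\gamma_2$.

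For the second assertion, I would split $t^{-1}C(U(t))=t^{-1}\overline{G}(U(t))-t^{-1}\overline{F}(U(t))$. Since $F$ is continuous, $\overline{F}(U(t))=1/t$, so the second term equals $t^{-2}\to 0$. For the first term, since $U$ is regularly varying with index $\gamma$ and $\overline{G}$ with index $-1/\gamma_2$, the composition $\overline{G}(U(t))$ is regularly varying in $t$ with index $-\gamma/\gamma_2=-\gamma_1/(\gamma_1+\gamma_2)$. Hence $t^{-1}\overline{G}(U(t))$ is regularly varying with index $-1-\gamma_1/(\gamma_1+\gamma_2)<-1$ and therefore tends to $0$. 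Combining the two pieces delivers the claim.

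There is no real obstacle; the only point requiring a touch of care is verifying that the factor $1-\overline{F}/\overline{G}$, although converging to $1$, still allows the product $\overline{G}\cdot(1-\overline{F}/\overline{G})$ to be regularly varying with the same index as $\overline{G}$. This follows because the product of a regularly varying function and a function converging to a positive constant is again regularly varying with the same index. The second-order hypothesis itself is not needed beyond what is implied by the first-order regular variation, but invoking it is harmless and keeps the statement consistent with the rest of the appendix. \hfill$\square$
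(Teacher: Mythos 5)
Your proof is correct and in fact slightly tightens the paper's argument. Both you and the paper start from $C=\overline{G}-\overline{F}$, observe $C\sim\overline{G}$ (equivalently, you factor $C=\overline{G}\,(1-\overline{F}/\overline{G})$ and note the second factor tends to $1$), and then verify that the composition $t\mapsto t^{-1}\overline{G}(U(t))$ has a strictly negative regular-variation index. The difference lies in the tools used for the second step. The paper invokes Lemma~3 of Hua and Joe (2011), which under the second-order condition gives the \emph{pure-power} asymptotics $\overline{F}(x)\sim\delta x^{-1/\gamma}$ and $\overline{G}(x)\sim\delta_2 x^{-1/\gamma_2}$ (i.e., that the slowly varying parts converge to constants), and then plugs these in directly; as a small side-effect the paper's displayed exponent picks up a sign typo ($t^{\gamma/\gamma_2-1}$ should be $t^{-\gamma/\gamma_2-1}$, though both are negative so the conclusion is unaffected). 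You instead argue purely at the level of regular-variation indices: $\overline{G}\circ U$ is RV with index $-\gamma/\gamma_2$, hence $t^{-1}\overline{G}(U(t))$ is RV with index $-1-\gamma/\gamma_2<0$ and therefore tends to $0$, and $t^{-1}\overline{F}(U(t))=t^{-2}$ since $F$ is continuous. This uses only first-order regular variation and standard facts about products and compositions of RV functions, so your closing remark that the second-order condition is not actually needed for this lemma is correct and a genuine simplification. The trade-off is that the paper's route yields explicit asymptotic constants (which may be reassuring or reusable elsewhere), whereas your route is shorter and more self-contained.
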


\begin{proof}
We have $C=\overline{G}-\overline{F}$ with $\gamma_{1}<\gamma_{2},$ hence
$C\left(  x\right)  \sim\overline{G}\left(  x\right)  $ as $x\rightarrow
\infty.$ Since both $\overline{F}$ and $\overline{G}$ satisfy the second-order
conditions $(\ref{second-order}),$ then in view of Lemma 3 in \cite{HJ-2011},
there exist two constants $\delta,\delta_{2}>0,$ such that $\overline
{F}\left(  x\right)  \sim\delta x^{-1/\gamma}$ and $\overline{G}\left(
x\right)  \sim\delta_{2}x^{-1/\gamma_{2}},$ as $x\rightarrow\infty.$ The first
equivalence implies that $U\left(  t\right)  \sim\delta^{\gamma}t^{\gamma},$
as $t\rightarrow\infty,$ therefore $C\left(  U\left(  t\right)  \right)
\sim\delta_{2}\delta^{\gamma/\gamma_{2}}t^{\gamma/\gamma_{2}},$ it follows
that%
\[
t^{-1}C\left(  U\left(  t\right)  \right)  \sim\delta_{2}\delta^{\gamma
/\gamma_{2}}t^{\gamma/\gamma_{2}-1}.
\]
By assumption, we have $\gamma_{1}<\gamma_{2},$ it follows that $\gamma
/\gamma_{2}=\gamma_{1}/\left(  \gamma_{1}+\gamma_{2}\right)  $ is less to
$1/2,$ thus $t^{-1}C\left(  U\left(  t\right)  \right)  \rightarrow0$ as
$t\rightarrow\infty,$ which achieves the proof of the lemma.
\end{proof}

\begin{lemma}
\label{lem2}Under the assumptions of Lemma $\ref{lem1},$ we have%
\begin{equation}
t\Lambda\left(  U\left(  t\right)  \right)  C\left(  U\left(  t\right)
\right)  \rightarrow\gamma_{1}/\gamma\text{ as }t\rightarrow\infty.
\label{limit}%
\end{equation}

\end{lemma}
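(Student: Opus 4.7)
The plan is to reduce the problem to an application of Karamata's theorem, using the regular variation inputs already exploited in the proof of Lemma~\ref{lem1}.

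First, I would carry over from the proof of Lemma~\ref{lem1} the Hua--Joe style equivalents that the second-order conditions imply: there are constants $\delta,\delta_{2}>0$ with $\overline{F}(x)\sim\delta x^{-1/\gamma}$, $C(x)\sim\delta_{2}x^{-1/\gamma_{2}}$ and $U(t)\sim\delta^{\gamma}t^{\gamma}$ as $x,t\to\infty$. The target of the argument will be the core equivalence
$$\Lambda(y)\,C(y)\sim\frac{\gamma_{1}}{\gamma}\,\overline{F}(y)\text{ as }y\to\infty,$$
after which substituting $y=U(t)$ and using $\overline{F}(U(t))=1/t$ gives $t\Lambda(U(t))C(U(t))\to\gamma_{1}/\gamma$ at once.

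To prove the core equivalence I would rewrite $\Lambda(y)=\int_{y}^{\infty}(-d\overline{F}(z))/C(z)$ and observe that the integrand is regularly varying at infinity with index $-1/\gamma-1+1/\gamma_{2}=-1/\gamma_{1}-1<-1$. Karamata's theorem, combined with the monotone density theorem applied to the monotone tail $\overline{F}$, then yields
$$\Lambda(y)\sim\gamma_{1}\,y\,\frac{-\overline{F}'(y)}{C(y)}\sim\frac{\gamma_{1}}{\gamma}\,\frac{\overline{F}(y)}{C(y)},$$
the last step using $-y\overline{F}'(y)\sim\overline{F}(y)/\gamma$.

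The delicate point is the monotone-density invocation, since the second-order condition does not by itself guarantee a density. To sidestep this I would, in the actual write-up, fall back on a direct computation: plugging the explicit equivalents $\overline{F}(x)\sim\delta x^{-1/\gamma}$ and $C(x)\sim\delta_{2}x^{-1/\gamma_{2}}$ into the integral gives $\Lambda(y)\sim(\delta\gamma_{1})/(\gamma\delta_{2})\,y^{-1/\gamma_{1}}$, hence
$$\Lambda(U(t))\,C(U(t))\sim\frac{\delta\gamma_{1}}{\gamma}\,\bigl(U(t)\bigr)^{-1/\gamma}\sim\frac{\delta\gamma_{1}}{\gamma}\,\delta^{-1}\,t^{-1}=\frac{\gamma_{1}}{\gamma t}.$$
Multiplying by $t$ establishes the claim; the nuisance constants $\delta,\delta_{2}$ cancel and only $\gamma_{1}/\gamma$ survives, exactly as asserted.
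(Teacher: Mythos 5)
Your proof is correct and reaches the right constant, but it takes a genuinely different route from the paper. The paper normalizes inside the integral: it writes
\[
t\Lambda\left(U(t)\right)C\left(U(t)\right)=-\int_{1}^{\infty}\frac{C\left(U(t)\right)}{C\left(zU(t)\right)}\,\frac{d\overline{F}\left(zU(t)\right)}{\overline{F}\left(U(t)\right)},
\]
uses $\overline{F}(U(t))=1/t$, and then passes to the limit via Potter's inequalities (Lemma~\ref{Lem3}) applied to the ratios $C(U(t))/C(zU(t))\to z^{1/\gamma_{2}}$ and $\overline{F}(zU(t))/\overline{F}(U(t))\to z^{-1/\gamma}$, arriving at $-\int_{1}^{\infty}z^{1/\gamma_{2}}\,dz^{-1/\gamma}=\gamma_{1}/\gamma$. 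No nuisance constants ever appear. You instead pull the Hua--Joe equivalents $\overline{F}(x)\sim\delta x^{-1/\gamma}$, $C(x)\sim\delta_{2}x^{-1/\gamma_{2}}$, $U(t)\sim\delta^{\gamma}t^{\gamma}$ out of Lemma~\ref{lem1}'s proof, compute $\Lambda(y)\sim\tfrac{\delta\gamma_{1}}{\gamma\delta_{2}}y^{-1/\gamma_{1}}$, and verify that $\delta,\delta_{2}$ cancel in $t\Lambda(U(t))C(U(t))$. Both strategies face the same technical subtlety: $\Lambda(y)=\int_{y}^{\infty}dF(z)/C(z)$ is a Stieltjes integral, so you cannot simply substitute a pointwise equivalent for $\overline{F}$ as though it gave the measure $d\overline{F}$. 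You rightly flag this for the Karamata/monotone-density route, but your fallback (``plugging the explicit equivalents into the integral'') quietly does the same thing. To make the fallback rigorous you would first replace $1/C(z)$ by $\delta_{2}^{-1}z^{1/\gamma_{2}}$ (legitimate, since this is a uniform substitution over $z\ge y$), then integrate by parts to turn $\int_{y}^{\infty}z^{1/\gamma_{2}}\,(-d\overline{F}(z))$ into a boundary term plus a Lebesgue integral $\tfrac{1}{\gamma_{2}}\int_{y}^{\infty}z^{1/\gamma_{2}-1}\overline{F}(z)\,dz$, to which the classical Karamata theorem applies directly; summing the two contributions produces your factor $\gamma_{1}/\gamma=(\gamma_{1}+\gamma_{2})/\gamma_{2}$. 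Spelled out that way the proof is fully correct and is a reasonable alternative to the paper's ratio-plus-Potter computation, at the cost of invoking the second-order-implied constants explicitly.
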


\begin{proof}
Recalling that $\Lambda\left(  x\right)  =$ $\int_{x}^{\infty}dF\left(
z\right)  /C\left(  z\right)  $ and $\overline{F}\left(  U\left(  t\right)
\right)  =t^{-1},$ we write%
\[
t\Lambda\left(  U\left(  t\right)  \right)  C\left(  U\left(  t\right)
\right)  =-\int_{1}^{\infty}\frac{C\left(  U\left(  t\right)  \right)
}{C\left(  zU\left(  t\right)  \right)  }\frac{d\overline{F}\left(  zU\left(
t\right)  \right)  }{\overline{F}\left(  U\left(  t\right)  \right)  }.
\]

Making use of Potter's inequality\ for both $C$ and $\overline{F},$ we infer
that,
\[
t\Lambda\left(  U\left(  t\right)  \right)  C\left(  U\left(  t\right)
\right)  \sim-\int_{1}^{\infty}z^{1/\gamma_{2}}dz^{-1/\gamma}=\gamma
_{1}/\gamma,\text{ as }t\rightarrow\infty,
\]
as sought.
\end{proof}

\begin{lemma}
\textbf{\label{Lem3}}Suppose that $\varphi$ is a regularly varying function
(at infinity) with index $\rho\in\mathbb{R},$ i.e. $\varphi\left(  tx\right)
/\varphi\left(  t\right)  \rightarrow x^{\rho},$ as $t\rightarrow\infty,$ for
all $x>0.$ Then for any $0<\epsilon<1,$ there exists $t_{0}=t_{0}\left(
\epsilon\right)  $ such that for $t\geq t_{0},$ $tx\geq t_{0},$
\[
\left(  1-\epsilon\right)  x^{\varrho}\min\left(  x^{\epsilon},x^{-\epsilon
}\right)  <\frac{\varphi\left(  tx\right)  }{\varphi\left(  t\right)
}<\left(  1+\epsilon\right)  x^{\rho}\max\left(  x^{\epsilon},x^{-\epsilon
}\right)  .
\]
In other words, we have, for every $x_{0}>0,$%
\[
\lim_{t\rightarrow\infty}\sup_{x\geq x_{0}}\left\vert \frac{\varphi\left(
tx\right)  }{\varphi\left(  t\right)  }-x^{\rho}\right\vert =0.
\]

\end{lemma}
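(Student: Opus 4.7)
The plan is to reduce to the slowly varying case and then apply the Karamata representation. First I would write $\varphi(x) = x^{\rho}L(x)$, where $L(x) := \varphi(x)/x^{\rho}$ is slowly varying at infinity, i.e., $L(tx)/L(t) \to 1$ as $t \to \infty$ for every fixed $x > 0$. Since $\varphi(tx)/\varphi(t) = x^{\rho}\, L(tx)/L(t)$, it suffices to prove the inequalities with $\rho = 0$ and $\varphi = L$, that is, to establish that for each $0 < \epsilon < 1$ there exists $t_0$ such that $t \geq t_0$ and $tx \geq t_0$ imply
\[
(1-\epsilon)\min(x^{\epsilon},x^{-\epsilon}) < L(tx)/L(t) < (1+\epsilon)\max(x^{\epsilon},x^{-\epsilon}).
\]

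The main step is to invoke the Karamata representation theorem: any slowly varying $L$ admits, on some interval $[a,\infty)$, the representation
\[
L(x) = c(x)\exp\!\left(\int_{a}^{x}\frac{\varepsilon(u)}{u}\,du\right),
\]
with $c(x) \to c_0 \in (0,\infty)$ and $\varepsilon(u) \to 0$ as $u \to \infty$. Then I would fix $\epsilon \in (0,1)$, choose $t_0$ large enough so that $|c(x)/c_0 - 1| < \epsilon/3$ and $|\varepsilon(u)| < \epsilon$ for all $x,u \geq t_0$, and compute
\[
\frac{L(tx)}{L(t)} = \frac{c(tx)}{c(t)}\exp\!\left(\int_{t}^{tx}\frac{\varepsilon(u)}{u}\,du\right).
\]
The exponent is bounded in absolute value by $\epsilon|\log x|$, which yields factors lying between $x^{-\epsilon}$ and $x^{\epsilon}$; the ratio $c(tx)/c(t)$ contributes a factor arbitrarily close to $1$. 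Combining these and splitting the cases $x \geq 1$ and $x < 1$ gives the two-sided Potter bound with $\min(x^{\epsilon},x^{-\epsilon})$ on the left and $\max(x^{\epsilon},x^{-\epsilon})$ on the right. Multiplying through by $x^{\rho}$ restores the general case.

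For the second assertion, the uniform convergence on $[x_0,\infty)$, I would combine the Potter bound just obtained with the pointwise limit $\varphi(tx)/\varphi(t) \to x^{\rho}$. Writing
\[
\left|\frac{\varphi(tx)}{\varphi(t)} - x^{\rho}\right| \leq x^{\rho}\left|\frac{L(tx)}{L(t)} - 1\right|,
\]
the Potter bound controls the factor in absolute value by $\max(x^{\epsilon},x^{-\epsilon}) - \min(x^{\epsilon},x^{-\epsilon}) + o(1)$, which tends to $0$ uniformly on compact subsets of $(0,\infty)$. To upgrade to the unbounded interval $[x_0,\infty)$, I would split at a large but fixed $M$: on $[x_0, M]$ uniform convergence on compacts applies directly, and on $[M,\infty)$ the Potter bound together with $\rho \leq 0$ or an appropriate normalization handles the tail.

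The main obstacle is the step where uniformity is extended from compact sets to $[x_0,\infty)$: when $\rho > 0$, the multiplicative factor $x^{\rho}$ grows, so the simple bound above does not shrink on the tail, and one must argue more carefully, effectively by rescaling and again invoking the Potter inequality with a smaller $\epsilon$, or by restricting to the form stated (the paper's use of $\sup_{x \geq x_0}$ of the difference makes sense when the application already controls $x$ in a bounded range, or when the index is such that the factor is integrable). I would handle this by noting that the two-sided Potter bound already yields the uniform statement in the form it is actually used in the paper, namely for ratios controlled on compact subintervals, which is what the proofs of Theorems \ref{Theorem1}--\ref{Theorem3} actually require.
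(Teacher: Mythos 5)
The paper does not actually prove this lemma: its ``proof'' is a citation of Potter's bounds as stated in de Haan and Ferreira (2006), Proposition B.1.9, Assertion 5. Your proposal instead reconstructs the classical derivation: factor out $x^{\rho}$, apply the Karamata representation $L(x)=c(x)\exp\left(\int_a^x \varepsilon(u)u^{-1}\,du\right)$ to the slowly varying part, and bound the exponent by $\epsilon\left\vert \log x\right\vert$ once $t\geq t_0$ and $tx\geq t_0$; this is correct (your $\epsilon/3$ bookkeeping for the ratio $c(tx)/c(t)$ does fold into the factors $1\pm\epsilon$ because $\epsilon<1$), and it is the standard textbook proof, so the difference between the two routes is simply that you prove what the paper imports as known. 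Your proof buys self-containedness modulo the representation theorem; the citation buys brevity and sidesteps the delicate second assertion. On that second assertion your hesitation is well placed: as stated for arbitrary $\rho\in\mathbb{R}$, the claim $\sup_{x\geq x_0}\left\vert \varphi(tx)/\varphi(t)-x^{\rho}\right\vert \rightarrow0$ is false in general (take $\varphi(x)=x^{\rho}\log x$ with $\rho\geq0$); it does hold when $\rho<0$, by splitting $[x_0,M]$, where uniform convergence on compacts applies, from $[M,\infty)$, where the Potter bound with $\epsilon<-\rho$ makes both $\varphi(tx)/\varphi(t)$ and $x^{\rho}$ uniformly small, and this negative-index case is precisely how the lemma is used in the paper ($\overline{F}$, $\overline{G}$, $C$ and the survival-type functions all have negative indices). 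The only slip is your phrase ``$\rho\leq0$ or an appropriate normalization'': $\rho=0$ already fails (e.g.\ $L=\log$), so the tail argument genuinely requires $\rho<0$.
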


\begin{proof}
This result, known as Potter's bound inequalities, is stated in, for instance,
\citeauthor{deHF06}, \citeyear{deHF06}, Proposition B.1.9, Assertion 5, page 367.
\end{proof}


\begin{thebibliography}{9999999999999999999999999999999999999}                                                            %


\bibitem[Beirlant et al.(2001)]{BMD}Beirlant, J, Matthys, G, Dierckx, G (2001)
Heavy-tailed distributions and rating. Astin Bull. 31: 37-58.

\bibitem[Beirlant \textit{et al}.(2004)]{BeGeS04}Beirlant, J, Goegebeur, Y,
Segers, J, Teugels, J (2004) Statistics of Extremes- Theory and applications. Wiley.

\bibitem[Brahim et al.(2014)]{BMN-2014}Brahimi, B, Meraghni, D, Necir, A
(2015) Approximations to the tail index estimator of a heavy-tailed
distribution under random censoring and application. http://arxiv.org/abs/1302.1666.

\bibitem[Drees and Huang(1998)]{DrHu98}Drees, H, Huang, X (1998) Best
attainable rates of convergence for estimators of the stable tail dependence
function. J. Multivariate Anal. 64: 25-47.

\bibitem[Einmahl \textit{et al}.(2006)]{EHL-2006}Einmahl, JHJ, de Haan, L, Li,
D (2006) Weighted approximations of tail copula processes with application to
testing the bivariate extreme value condition. Ann. Statist. 34: 1987-2014.

\bibitem[Embrechts \textit{et al}.(1997)]{EKM-1997}Embrechts, P,
Kl\"{u}ppelberg, C, Mikosch, T (1997) Modelling Extremal Events for Insurance
and Finance. Springer-Verlag, New York.

\bibitem[Escudero and Ortega( 2008)]{EO-2008}Escudero, F, Ortega, E (2008)
Actuarial comparisons for aggregate claims with randomly right-truncated
claims. Insurance Math. Econom. 43: 255-262.

\bibitem[Gardes and Stupfler(2014)]{GS2014}Gardes, L, Stupfler, G (2014)
Estimating extreme quantiles under random truncation (to appear in TEST).

\bibitem[Gomes and Pestana(2007)]{GP07}Gomes MI, Pestana, D (2007) A simple
second-order reduced bias' tail index estimator. J. Stat. Comput. Simul. 5-6: 487-504.

\bibitem[Gudendorf and Segers(2010)]{GUS2010}Gudendorf, G, Segers, J (2010)
Extreme-value copulas. Copula theory and its applications, 127--145, Lect.
Notes Stat. Proc., 198: Springer, Heidelberg.

\bibitem[de Haan and Stadtm\"{u}ller(1996)]{deHS96}de Haan, L,
Stadtm\"{u}ller, U (1996) Generalized regular variation of second order. J.
Australian Math. Soc. (Series A) 61: 381-395.

\bibitem[de Haan\textit{\ et al.}(2008)]{deHNePe2008}de Haan, L, Neves, C,
Peng, L (2008) Parametric tail copula estimation and model testing. J.
Multivariate Anal. 99: 1260-1275.

\bibitem[de Haan and Ferreira(2006)]{deHF06}de Haan, L, Ferreira, A (2006)
Extreme Value Theory: An Introduction. Springer.

\bibitem[Hill(1975)]{Hill75}Hill, BM (1975) A simple general approach to
inference about the tail of a distribution. Ann. Statist. 3: 1163-1174.

\bibitem[Huang(1992)]{Huang-1992}Huang, X (1992) Statistics of Bivariate
Extreme Values. Tinbergen Institute Research Series 22.

\bibitem[Hua and Joe(2011)]{HJ-2011}Hua, L, Harry, J (2011) Second order
regular variation and conditional tail expectation of multiple risks.
Insurance Math. Econom. 49: 537-546.

\bibitem[Lynden-Bell(1971)]{LB-71}Lynden-Bell, D (1971) A method of allowing
for known observational selection in small samples applied to 3CR quasars.
Monthly Notices Roy. Astronom. Soc. 155: 95-118.

\bibitem[Mason(1982)]{Mas82}Mason, DM (1982) Laws of large numbers for sums of
extreme values. Ann. Probab. 10: 756-764.

\bibitem[Necir and Meraghni(2009)]{NM-2009}Necir, A, Meraghni, D (2009)
Empirical estimation of the proportional hazard premium for heavy-tailed claim
amounts. Insurance Math. Econom. 45: 49-58.

\bibitem[Peng(2010)]{Peng2010}Peng, L (2010) A practical way for estimating
tail dependence functions. Statist. Sinica 20: 365-378.

\bibitem[Reiss and Thomas(2007)]{ReTo7}Reiss, RD, Thomas, M (2007) Statistical
Analysis of Extreme Values with Applications to Insurance, Finance, Hydrology
and Other Fields, 3rd ed. Birkh\"{a}user Verlag, Basel, Boston, Berlin.

\bibitem[Resnick(2006)]{Res06}Resnick, S (2006) Heavy-Tail Phenomena:
Probabilistic and Statistical Modeling. Springer.

\bibitem[Rolski et al.(1999)]{RSST-1999}Rolski, T, Schmidli, H, Schmidt, V,
Teugels, J (1999) Stochastic Processes for Insurance and Finance. John Wiley
\& Sons, Chichester.

\bibitem[Schmidt and Stadtm\"{u}ller(2006)]{ScSt2006}Schmidt, R,
Stadtm\"{u}ller, U (2006) Nonparametric estimation of tail dependence. Scand.
J. Statist. 33: 307-335.

\bibitem[Strzalkowska-Kominiak and Stute(2009)]{SS09}Strzalkowska-Kominiak, E,
Stute, W, (2009) Martingale representations of the Lynden-Bell estimator with
applications. Statist. Probab. Lett. 79: 814-820.

\bibitem[Stute and Wang(2008)]{SW08}Stute, W, Wang, J (2008) The central limit
theorem under random truncation. Bernoulli 14: 604-622.

\bibitem[Vandewalle and Beirlant (2006)]{VanBer}Vandewalle, B, Beirlant, J
(2006) On univariate extreme value statistics and the estimation of
reinsurance premiums, statistics and the estimation of reinsurance premiums.
Insurance Math. Econom. 38: 441-459.

\bibitem[Weissman(1978)]{Weiss-78}Weissman, I (1978) Estimation of parameters
and large quantiles based on the $k$ largest observations. J. Amer. Statist.
Assoc. 73: 812-815.

\bibitem[Woodroofe(1985)]{Wood85}Woodroofe, M (1985) Estimating a distribution
function with truncated data. Ann. Statist. 13:163-177.
\end{thebibliography}
\end{document}